\newcommand {\R} {\mathbb R}
\newcommand {\E} {\mathbb E}
\newcommand {\B} {\mathbb B}
\newcommand {\Sp} {\mathbb S}
\newcommand{\al}{\alpha}
\newcommand{\be}{\beta}
\newcommand{\de}{\delta}
\newcommand{\deltabar}{{\overline{\delta}}}
\newcommand{\eps}{\varepsilon}
\newcommand{\bx}{\overline x}
\newcommand{\by}{\overline y}
\newcommand {\bd} {{\rm bd}\,}
\newcommand{\iv}{^{-1} }
\newcommand{\rg}{{\rm r}[A,B](\bx)}
\newcommand{\rgg}{{\rm r}'[A,B](\bx)}
\newcommand{\rga}{{\rm r}_a[A,B](\bx)}
\newcommand{\rgd}{{\rm r}_d[A,B](\bx)}
\newcommand{\rgdd}{{\rm r}_v[A,B](\bx)}
\newcommand{\sr}{{\rm sr}[A,B](\bx)}
\newcommand{\srr}{{\rm sr}'[A,B](\bx)}
\newcommand{\rgPhi}{{\rm r}[F](\bx,\by)}
\newcommand{\srgPhi}{{\rm sr}[F](\bx,\by)}
\newcommand {\Int} {{\rm int}\,}
\newcommand{\ang}[1]{\left\langle #1 \right\rangle}
\newcommand{\qdtx}[1]{\quad\mbox{#1}\quad}
\newcommand\xqed{%
  \leavevmode\unskip\penalty9999 \hbox{}\nobreak\hfill
  \quad\hbox{$\triangle$}}
\def\RHS{right-hand side}
\def\SVM{set-valued mapping}
\renewcommand{\equiv}{:=}
\newcommand{\Ball}{{\mathbb{B}}}
\newcommand{\Euclid}{\mathbb{E}}
\newcommand{\vbar}{{\overline{v}}}
\newcommand{\xbar}{{\overline{x}}}
\newcommand{\ybar}{{\overline{y}}}
\newcommand{\paren}[1]{\left(#1\right)}
\newcommand{\set}[2]{\left\{#1\,\left|\,#2\right.\right\}}
\newcommand{\ip}[2]{\left\langle #1,#2\right\rangle}
\newcommand{\norm}[1]{\left\|#1\right\|}
\newcommand{\mmap}[3]{#1:\,#2\rightrightarrows #3\,}
\newcommand{\TO}[1]{\stackrel{#1}{\to}}
\DeclareMathOperator{\reg}{reg}
\DeclareMathOperator{\subreg}{subreg}
\newcommand {\Limsup} {\mathop{{\rm Lim\,sup}\,}}
\DeclareMathOperator{\argmin}{argmin\,}
\DeclareMathOperator{\cone}{{cone}}
\DeclareMathOperator{\gph}{gph}
\DeclareMathOperator{\intr}{int\,}
\DeclareMathOperator{\Find}{\mathsf{Find}\,}
\newcommand{\nsub}[1]{N_{#1}}
\newcommand{\ncone}[1]{{N}_{#1}}
\newcommand{\fncone}[1]{N^F_{#1}}
\newcommand{\pncone}[1]{N^{\text{\rm prox}}_{#1}} 
\newcommand{\rpncone}[2]{N^{\text{\rm prox}}_{#1|#2}}
\newcommand{\rfncone}[2]{N^F_{#1|#2}}
\newcommand{\rncone}[2]{{N}_{#1|#2}} 
\newcommand{\cl}{\mbox{\rm cl}\,}
\newcommand{\sd}{\partial}
\title{Set Regularities and Feasibility
Problems}
\author{
Alexander Y. Kruger,
\thanks{Centre for Informatics and Applied Optimization, Federation University Australia, POB 663, Ballarat,
VIC 3350, Australia.  AYK was supported by Australian Research Council, project DP160100854.
E-mail:  \texttt{a.kruger@federation.edu.au} }
D. Russell Luke\thanks{Institut f\"ur Numerische und Angewandte Mathematik,
Universit\"at G\"ottingen,
37083 G\"ottingen, Germany. DRL was supported in part by German Israeli Foundation Grant G-1253-304.6 and 
Deutsche Forschungsgemeinschaft Research Training Grant 2088 TP-B5.
E-mail:  \texttt{r.luke@math.uni-goettingen.de}}
~and
Nguyen H. Thao\thanks{Institut f\"ur Numerische und Angewandte Mathematik,
Universit\"at G\"ottingen,
37083 G\"ottingen, Germany. NHT was supported by German Israeli Foundation Grant G-1253-304.6.
E-mail:  \texttt{h.nguyen@math.uni-goettingen.de} }
}
\begin{document}

\maketitle

\begin{abstract}
We synthesize and unify notions of regularity, both of individual sets and of collections of sets, as they appear
in the convergence theory of projection methods for consistent feasibility problems.  Several new
characterizations of regularities are presented which shed light on the relations between seemingly different
ideas and point to possible necessary conditions for local linear convergence of fundamental algorithms.\\
\end{abstract}

{\small \noindent {\bfseries 2010 Mathematics Subject
Classification:} {Primary 49J53, 65K10
Secondary 49K40, 49M05, 49M37, 65K05, 90C30.\\
}}

\noindent {\bfseries Keywords:}
Alternating projections,  CHIP, Clarke regularity, Douglas--Rachford, H\"older regularity, Metric regularity,  
Normal cone, Normal qualification condition, 
Prox-regularity, Transversality, 
Weak-sharp minima

\section{Introduction}
In recent years there has been a tremendous interest in {\em first-order methods} for
solving variational problems.  As the name suggests, these methods only use information that,
in some way, encodes the gradient of a function to be minimized.  Often one has in mind
the following universal optimization problem for such methods
\begin{equation}\label{e:basic problem}
 \underset{x\in \Euclid}{\mbox{minimize}} \sum_{j=1}^m f_j(x)
\end{equation}
where $f_j$ are scalar extended-valued functions, not necessarily smooth or convex, on the
Euclidean space $\Euclid$.  This specializes to constrained optimization in the
case that one or more of the functions $f_j$ is an indicator function for a set.

All of the (nonconvex) convergence results concerning local linear convergence
that we have seen in the literature
involve implicitly or explicitly assumptions on the regularity of the functions $f_j$ and on
the relation of the functions to each other at critical points.  
Prominent
examples of this are the assumption that the functions satisfy the {\em Kurdyka-{\L}ojasiewicz}
property \cite{Loj63,Kur98,BolDanLew06,BolDanLeyMaz10} or that certain {\em constraint qualifications} are satisfied at
critical points, like interiority of
the constraints and invertability of the Jacobian of the objective in directions normal to the
constraints.

Our goal in this note is to identify common ideas and properties for a special case of
\eqref{e:basic problem} and to develop a general framework that both encapsulates
all of these ideas and is robust enough to be applied in other settings.
We focus our attention on the {\em feasibility} problem
\begin{equation}\label{e:feasibility}
 \Find \xbar\in \cap_{j=1}^m A_j,
\end{equation}
which is the specialization of \eqref{e:basic problem} to the case
\[
f_j(x)=\iota_{ A_j}(x)\equiv \begin{cases}0&\mbox{ if }x\in A_j\\ +\infty&\mbox{ else}\end{cases}\qquad (j=1,2,\dots,m).
\]

In the setting of the feasibility problem, regularity properties of individual sets (\emph{elemental regularity}
-- see Section~\ref{Reg_of_sets}) and of their intersections (\emph{transversality} and \emph{subtransversality} 
of collections of sets
-- see Section~\ref{Reg_of_Col}) come into play.
Both types of regularity have long history.
A typical classical elemental regularity assumption is the convexity of the sets, while the traditional assumption
that sets have a point in common on their interiors provides an example of transversal regularity in the convex setting.
Another classical example of the second type of regularity is the concept of transversality of smooth manifolds.

In the last decade there has been a great deal of interest in extending the classical notions of regularity to 
include nonconvex and nonsmooth sets, motivated to a large extent by nonsmooth and nonconvex 
optimization and attendant subdifferential and coderivative calculus, optimality and stationarity 
conditions and convergence analysis of algorithms.
Examples of modern elemental regularity concepts include \emph{Clarke regularity} \cite{RocWet98},
\emph{prox-regularity} (Poliquin, Rockafellar, and Thibault \cite{PolRocThi00}),
\emph{super-regularity} (Lewis, Luke, and Malick \cite{LewLukMal09}),
\emph{$(A',\varepsilon,\delta)$-re\-gularity} and \emph{$(\varepsilon,\delta)$-regularity}
(Bauschke, Luke, Phan, and Wang \cite{BauLukPhaWan13.2}),
\emph{relative ($A',\varepsilon,\delta$)-subregularity} (Hesse and Luke \cite{HesLuk13}),
\emph{relative $\sigma$-H\"older regularity} (Noll and Rondepierre \cite{NolRon15}).
Among the numerous transversal regularity (regularity of intersections) concepts we mention
\emph{Jameson properties (N) and (G)} \cite{Jam72}),
the \emph{conical hull intersection property (CHIP)} (Chui, Deutsch, and Ward \cite{ChuDeuWar90}),
(local, bounded) \emph{linear regularity} (Bauschke and Borwein \cite{BauBor93} and Zheng and Ng \cite{ZheNg08}),
the \emph{strong conical hull intersection property (strong CHIP)} (Deutsch, Li, and Ward \cite{DeuLiWar97}),
\emph{metric regularity} (Li \cite{Li97}),
\emph{metric inequality} (Ngai and Th\'era \cite{NgaThe01}),
the \emph{closed intersection property}, the \emph{normal property}, the \emph{weak}, \emph{uniform} and
\emph{dual normal properties}, the \emph{normal conical hull intersection property (normal CHIP)}
(Bakan, Deutsch, and Li \cite{BakDeuLi05}),
the {\em normal qualification condition} (Mordukhovich \cite{Mor06}),
\emph{regularity}, \emph{strong regularity}, or \emph{uniform regularity} (Kruger \cite{Kru05,Kru06,Kru09}),
the \emph{linearly regular intersection} (Lewis, Luke, and Malick \cite{LewLukMal09}),
\emph{linear coherence, alliedness} (Penot \cite{Pen13}),
the \emph{$(A,B)$-qualification condition} (Bauschke, Luke, Phan, and Wang \cite{BauLukPhaWan13.2}),
\emph{inherent and intrinsic transversality}
(Drusvyatskiy, Ioffe, and Lewis \cite{DruIofLew14, DruIofLew15}),
\emph{separable intersection} (Noll and Rondepierre \cite{NolRon15}),
\emph{transversality} and
\emph{subtransversality} (Ioffe \cite{Iof15}).
Some of the above concepts used under different names by different authors actually coincide.

A short survey of the recent developments in this area in the general nonsmooth and nonconvex
setting with the emphasis on convergence analysis is provided in Section~\ref{S2}.
The elemental and transversal regularity properties are further studied in Sections~\ref{Reg_of_sets}
and \ref{Reg_of_Col}, respectively.

Section~\ref{Reg_of_sets} introduces a general framework for elemental regularity of sets that
provides a common language for the many different definitions that have appeared to date.  This
new framework makes the cascade of implications between the different types of regularity
more transparent, namely that
convexity $\implies$  prox-regularity $\implies$ super-regularity $\implies$ Clarke regularity
$\implies$ $(\varepsilon,\delta)$-regularity $\implies$ $(\varepsilon,\delta)$-sub\-regularity
$\implies$ $\sigma$-H\"older regularity see Theorem \ref{t:set regularity equivalences}.
The last of these implications is new.

Section~\ref{Reg_of_Col} focuses on two local regularity properties of collections of 
sets which we call here \emph{subtransversality} and \emph{transversality}.
Both properties admit several representations and characterizations: metric, dual, angle, 
etc, and because of that or just historically are known under various names, e.g. (local) 
\emph{linear regularity}, \emph{metric regularity}, \emph{linear coherence}, and
\emph{metric inequality} for the first property, and \emph{strong regularity}, 
\emph{uniform regularity}, \emph{alliedness}, \emph{normal qualification condition} 
for the second one.  They correspond (are in a sense equivalent) to \emph{subregularity} 
and \emph{metric regularity} of \SVM s, respectively.
At the same time, the properties are related to certain `good' mutual arrangement of several 
objects (sets) in space, and after a discussion with experts in 
variational analysis\footnote{We are particulary indebted to Alex Ioffe for thoughtful and persuasive
discussions.}, 
we have decided to 
adopt the classical `transversality' terminology.

For ease of exposition, our discussion is limited to the case of just two closed subsets of Euclidean
space with nonempty intersection, though most of the regularity properties discussed are easily extended to
collections of more than two sets 
with nonempty intersection.
We compare the various representations of these properties and discuss also some recently introduced 
`restricted' regularity properties.
A number of characterizations of the properties are formulated.
Some characterizations are new, see Theorem~\ref{t:MAINi}\eqref{t:MAINid}, Theorem \ref{t:svm}\eqref{NewThe1i},
Theorem~\ref{t:MAINii}\eqref{t:MAINiic} and \eqref{t:MAINiig}, Theorem~\ref{t:tsr suff}\eqref{t:tsr suff iv} and
\eqref{t:separable and hoelder}.  We emphasize the important dual characterization of subtransversality in
Theorem~\ref{t:tsr suff}\eqref{t:tsr suff iv}
which expands and improves (in the setting adopted in the current article)
\cite[Theorem~4.1]{KruTha15}.
The proof of this assertion is going to appear in the forthcoming article \cite{KruLukTha}.
In contrast to dual characterizations for transversality which are necessary and sufficient, this note underscores
the fact that the known dual characterizations for subtransversality are {\em sufficient}.  This raises
the question whether {\em necessary} dual characterizations exist.

%
\subsection{Notation and basic definitions}
The {\em projector} onto the set $A$, $\mmap{P_{A}}{\Euclid}{A}$,
is central to algorithms for feasibility and is defined by
\[
P_{A}(x)\equiv \underset{a\in A}{\argmin}\norm{a-x}.
\]
A {\em projection} is a selection from the projector.
This exists for any closed set $ A$ in Euclidean space, as can be deduced by the
continuity and coercivity of the norm.  Note that the projector is not, in general, single-valued,
and indeed uniqueness of the projector defines a type of regularity of the set $ A$:  local
uniqueness characterizes {\em prox-regularity} \cite{PolRocThi00}  while in finite dimensional
settings global uniqueness characterizes convexity \cite{Bun34}.
The inverse of the projector is well defined:
\[
P_A^{-1}(a)\equiv\set{x\in\Euclid}{a\in P_A(x)}.
\]
Following \cite{BauLukPhaWan13.2}, we use this object to define the various normal cone mappings,
which in turn lead to  the
{\em subdifferential} of the indicator function $\iota_ A$.  This brings the theory presented here
to the edge of a much broader context of descent methods for solving
\eqref{e:basic problem}.  We will, however, focus exclusively on the feasibility problem for two sets.

Given a subset $A\subset\mathbb{E}$ and a point $\bx\in  A$, the
\emph{Fr\'echet, proximal and limiting normal cones} to $ A$ at $\bx$ are defined, respectively, as follows:
\begin{gather}\label{NC1}
\fncone{A}(\bx):= \left\{v\in\E\mid \limsup_{x\TO{ A}\bx,\,x\neq \bx} \frac {\langle v,x-\bx \rangle}{\|x-\bx\|} \leq 0 \right\},
\\\label{NC2}
\pncone{ A}(\bx):=\cone\left(P_{A}^{-1}(\bx)-\bx\right),
\\\label{NC3}
\ncone{A}(\xbar ):= \Limsup_{x\TO{A}\bx}\pncone{ A}(x):=\left\{v=
\lim_{k\to\infty}v_k\mid v_k\in \pncone{ A}(x_k),\; x_k\TO{ A} \bx \right\}.
\end{gather}
In the above and throughout this paper, $x\TO{ A} \bx$ means that $x\to \bx$ with $x\in  A$.

All these three sets are clearly cones.
Unlike the first two cones, the third one can be nonconvex.
It is easy to verify that
$\pncone{A}(\bx)\subseteq \fncone{A}(\bx)$.
Furthermore, if $A$ is closed, then
\begin{gather}\label{NC4}
\ncone{A}(\xbar )= \Limsup_{x\TO{A}\bx}\fncone{A}(x).
\end{gather}
The last formula can serve as a definition of the limiting normal cone in general normed linear spaces
when the original definition \eqref{NC3} in terms of proximal normals is not applicable.
If $\bx\in\bd A$, then $\ncone{A}(\xbar )\ne\{0\}$.
If $A$ is a convex set, then all three cones \eqref{NC1}--\eqref{NC3} coincide and reduce to the
normal cone in the sense of convex analysis:
\begin{gather}\label{CNC}
\ncone{A}(\bx):
= \left\{v\in\E\mid \langle v,x-\bx \rangle \leq 0 \qdtx{for all} x\in A\right\}.
\end{gather}

In differential geometry it is more common to work with  the tangent space, but for smooth manifolds the
normal cone \eqref{NC1} (the same as \eqref{NC3}) is a subspace and dual to the tangent space. Following Rockafellar and Wets \cite[Example~6.8]{RocWet98}, we say that
a subset $ A\subset\mathbb{E}$ is a $k$-di\-mensional ($0<k<n:=\dim\mathbb{E}$) smooth manifold around a point
$\xbar \in  A$ if there are a neighborhood $U$ of $\xbar $ in $\mathbb{E}$ and a smooth
(i.e., of $\mathcal{C}^1$ class) mapping $F:U\to \mathbb{R}^m$ ($m:=n-k$) with $\nabla F(\xbar )$ of
full rank $m$ such that
$ A\cap U = \{x\in U\mid F(x)=0\}$.
The \emph{tangent space} to $ A$ at $\xbar $ is a linear approximation of $ A$ near $\bx$ and is given by
\begin{align*}
T_{ A}(\xbar ) := \left\{x\in \mathbb{E}\mid \nabla F(\xbar )x=0\right\}.
\end{align*}
The \emph{normal space} to $A$ at $\xbar $ is defined as the orthogonal complement of
$T_{ A}(\xbar )$ and can be written as
\begin{align}\label{NS}
\nsub{A}(\xbar ) := \left\{\nabla F(\xbar )^{*}y\mid y\in \mathbb{R}^m\right\}.
\end{align}
It is in a sense a dual space object.
If $ A$ is a smooth manifold, then cones \eqref{NC1}, \eqref{NC3} and \eqref{CNC} 
reduce to the normal space \eqref{NS}.

Normal cones are central to characterizations both of the regularity of individual sets as well as of the regularity (transversality)
of collections of sets.  
For collections of sets, 
when dealing with projection algorithms,
it is important to account for the {\em relation} of the sets to
each other and so the classical definitions of the normal cones above are too blunt for a refined numerical
analysis.  A typical situation:
two nonempty sets $A$ and $B$ such that the affine span of $A\cup B$ is not equal to the whole space (e.g., two
distinct intersecting lines in $\R^3$).
One would expect all projections to lie in this affine span and the convergence to depend only on the
mutual arrangement of the sets within the span.
However, the normals (of any kind) to this affine span are also normals to the sets.
They make a nontrivial subspace and this causes problems for the regularity conditions on
collections of sets discussed below.
In the context of algorithms, the only regularity conditions that are relevant are those that
apply to the space where the iterates lie.  In the case of algorithms like alternating projections,
this is often an affine subspace of dimension smaller than the space in which the problem is formulated,
as the example above illustrates.

That normals to one of the sets should take into account the location of the other set was first
recognized by Bauschke, Luke, Phan and Wang
\cite{BauLukPhaWan13.2,BauLukPhaWan13.1} and has been used by Drusvyatskiy, Ioffe and Lewis
\cite{DruIofLew14,DruIofLew15}
\footnote{We refer on several occasions to the preprint \cite{DruIofLew14} because some definitions
and results present there and used in the current article are not included in the published version
\cite{DruIofLew15}.}, and Noll and Rondepierre \cite{NolRon15}, leading to weaker
``restricted'' regularity conditions.  The most straightforward idea is to consider only those normals
to each of the sets which are directed towards the other set.
Given two closed sets $A$ and $B$ and a point $\bx\in\bd A\cap\bd B$, one can define
(see \cite{BauLukPhaWan13.2,BauLukPhaWan13.1}) the following restricted analogues
of the cones \eqref{NC1}--\eqref{NC3}:
\begin{gather*}\notag
\rfncone{A}{B}(\bx):=\fncone{A}(\bx)\cap\cone\left(B-\bx\right),
\\\label{Bprox}
\rpncone{A}{B}(\bx):=\cone\left((P_A^{-1}(\bx)\cap B)-\bx\right),
\\\label{BlFr}
\rncone{A}{B}(\bx):= \Limsup_{x\TO{A}\bx}\rpncone{A}{B}(x),
\end{gather*}
which are called, respectively, the \emph{$B$-Fr\'echet normal cone}, \emph{$B$-proximal normal cone}
and \emph{$B$-limiting normal cone} to $A$ at $\bx$.
When $B=\E$, they coincide with the cones \eqref{NC1}--\eqref{NC3}.
Note that in general $\rpncone{A}{B}(\bx)$ and $\rncone{A}{B}(\bx)$ can be empty.
The essence of what we call ``dual regularity conditions'' consists in
computing appropriate normal cones (limiting, Fr\'echet, or proximal) to each of the sets at the
reference point (or nearby) and ensuring that the cones do not contain oppositely directed nonzero vectors.
Such conditions are important for many applications including convergence analysis of projection algorithms.

The regularity/transversality properties of the collection $\{A,B\}$ in $\Euclid$
can be understood in terms of the corresponding properties of the set-valued mapping
$F:\Euclid \rightrightarrows {\Euclid}^2$
defined by (cf. \cite{Iof00,Iof15,DruIofLew15,LewLukMal09})
\begin{equation}\label{FAB}
F(x):= (A-x)\times (B-x).
\end{equation}
For $x \in \Euclid$ and $u=(u_1,u_2) \in {\Euclid}^2$, we have
$$
x \in A\cap B \iff (0,0) \in F(x),\quad F^{-1}(u) = (A-u_1)\cap (B-u_2),\qdtx{and} F^{-1}(0) =A\cap B.
$$
The mapping \eqref{FAB} is not the only possibility.
Another useful construction is given by the \SVM\ $G:\Euclid^2 \rightrightarrows \Euclid$
(cf. \cite[Page~226]{LewMal08}, \cite[Page~1638]{DruIofLew15}, \cite[Corollary~7.13]{Iof15}) defined as follows:
\begin{equation}\label{F3}
G(x_1,x_2):=
\begin{cases}
\{x_1-x_2\}&\mbox{if } x_1\in A\mbox{ and }x_2\in B,
\\
\emptyset&\mbox{otherwise}.
\end{cases}
\end{equation}
Obviously,
\begin{equation*}
0 \in G(x_1,x_2) \quad\Longleftrightarrow\quad x_1=x_2\in A\cap B.
\end{equation*}

Conversely, the regularity of certain set-valued mappings can be understood
in terms of properties of the corresponding properties of collections of sets.  Indeed,
 given a set-valued mapping $F:\E_1\rightrightarrows\E_2$,
its regularity properties a point $(\bx,\by)$ in its
graph $\gph F:=\{(x,y)\in\E_1\times\E_2\mid y\in F(x)\}$ are connected to those of
the collection of sets defined by (cf. \cite[Corollary~2.1]{Kru05})
\begin{equation*}
A:=\gph F\qdtx{and}B:=\E_1\times \{\bx_2\}
\end{equation*}
in $\E_1\times\E_2$ .
One can check that $(\bx_1,\bx_2)\in A\cap B=F\iv(\bx_2)\times \{\bx_2\}$.

In the next definition, we recall two common regularity properties for a set-valued mapping
$F:\E_1\rightrightarrows\E_2$ between two Euclidean spaces at a point $(\bx,\by)\in\gph F$.
\begin{definition}\label{MR}
\begin{enumerate}
\item\label{MRi}
$F$ is \emph{metrically subregular} at $\xbar $ for $\ybar$ if there exist $\delta>0$ and
$\al>0$
such that
\begin{equation}\label{msr}
\al d\left(x,F^{-1}(\ybar )\right) \le d(\ybar ,F(x))\;\;\mbox{for all}\;\; x \in \Ball_{\delta}(\xbar ).
\end{equation}
If, additionally, $\xbar $ is an isolated point of $F^{-1}(\ybar )$, then $F$ is called {\em strongly
metrically subregular at $\xbar $ for $\ybar $}.
\item\label{MRii}
$F$ is \emph{metrically regular} at $\xbar $ for $\ybar $ if there exist $\delta>0$ and
$\al>0$
such that
\begin{equation}\label{mr}
\al d\left(x,F^{-1}(y)\right) \le d\left(y,F(x)\right)\;\;\mbox{for all}\;\; x \in \Ball_{\delta}(\xbar ),\; y \in \Ball_{\delta}(\ybar ).
\end{equation}
\end{enumerate}
\end{definition}

We use the notation $\srgPhi$ and $\rgPhi$ to denote the supremum of all $\al$ such that conditions
\eqref{msr} and \eqref{mr}, respectively, hold for some $\de>0$.
Properties \eqref{MRi} and  \eqref{MRii} in Definition~\ref{MR} are equivalent to conditions
$\srgPhi>0$ and $\rgPhi>0$, respectively, and the values $\srgPhi$ and $\rgPhi$ characterize the
corresponding properties quantitatively.
Some authors use for that purpose the reciprocals of the above constants (cf. \cite{DonRoc09}):
$$\subreg(F;\xbar |\ybar ):=\frac{1}{\srgPhi},\quad
\reg(F;\xbar |\ybar ):=\frac{1}{\rgPhi},$$
which are referred to as \emph{subregularity modulus} and \emph{regularity modulus}, respectively.
One obviously has $0\le\rgPhi\le\srgPhi$, which means that metric regularity is in general a
stronger property than metric subregularity.

Both regularity properties in Definition~\ref{MR} are fundamental for variational analysis
(especially the second one) and have found numerous applications in optimization and other fields.
Several useful characterizations of these properties and the fundamental equivalences
\begin{gather*}
\mbox{Metric regularity}
\;\;\Leftrightarrow\;\;
\mbox{Linear openness}
\;\;\Leftrightarrow\;\;
\mbox{Aubin property of the inverse},
\\
\mbox{Metric subregularity}
\;\;\Leftrightarrow\;\;
\mbox{Linear subopenness}
\;\;\Leftrightarrow\;\;
\mbox{Calmness of the inverse}
\end{gather*}
have been established.
We refer the readers to the monographs \cite{Mor06,DonRoc09} and surveys
\cite{Aze06,Iof15,Iof00,ApeDurStr13} for a comprehensive exposition of the properties.
Note that metric subregularity of $F$ at $\bx$ for $\by\in F(\bx)$ is equivalent to the local error bound property of the
real-valued function $x\mapsto d(\ybar ,F(x))$ at $\bx$, while metric regularity of $F$ means that the mentioned
error bound property holds uniformly with respect to $y$ in a neighborhood of $\by$.

Our other basic notation is standard; cf. \cite{Mor06,RocWet98, DonRoc09}.
The open unit ball and the unit sphere in a Euclidean space are denoted $\B$ and $\mathbb{S}$, respectively.
$\B_\delta(x)$ stands for the open ball with radius $\delta>0$ and center $x$.
For a subspace $V$ of a Euclidean space $\E$, $V^{\perp}:=\set{u\in \E}{\ip{u}{v}=0 \mbox{ for all } v\in V}$ is the
orthogonal complement subspace of $V$.
For a real number $\alpha$, $[\alpha]_+$ denotes $\max\{\alpha,0\}$.

\section{Regularity notions and convergence results}\label{S2}
In the definitions below, we keep the terminology coming from the original publications although the
use of words obviously lacks consistency.

\subsection{Elemental set regularities}
The following definition is a compilation of some of the main definitions of regularities of sets.
\begin{definition}[set regularities]\label{d:set regularities}
 Let $ A\subset \Euclid$ be closed
and let the projector be with respect to the Euclidean norm.
\begin{enumerate}
  \item\label{d:Hoelder}\cite[Definition 2]{NolRon15} $ A$ is {\em $\sigma$-H\"older regular relative to
  $B\subset \E$ at $\xbar\in A\cap B$ with constant $c>0$}
  if there exists a neighborhood $W$ of $\xbar$ such that for each $b\in B\cap W$ and $b_A\in P_A(b)\cap W$,
  \begin{equation*}
   \Ball_{(1+c)\norm{b-b_A}}(b)\cap A\cap V =\emptyset,
  \end{equation*}
where
\[
V\equiv \set{x\in P^{-1}_B(b)}{\ip{b-b_A}{x-b_A}>\sqrt{c}\norm{b-b_A}^{\sigma+1}\norm{x-b_A}}.
\]

\item\label{d:L-eps-del-subregular}\cite[Definition 2.9]{HesLuk13}
 $A$ is \emph{($ A',\varepsilon,\delta$)-subregular relative to $B\subset\Euclid$ at $\bx$}
if
\begin{align}\label{eq:epsilondeltasubregularity}
\ip{v}{b-a} \leq\varepsilon\norm{v}\norm{b-a}
\end{align}
holds for all $b\in B\cap \Ball_\delta(\xbar)$,
$a\in A\cap \Ball_\delta(\xbar)$ and $v\in \rpncone{ A}{ A'}(a)$.
If $  B=\{\xbar\}$, then the respective qualifier, ``relative to'' is dropped.
If $ A'=\Euclid$, then $ A$ is said to be
\emph{$(\varepsilon,\delta)$-subregular} (relative to $  B$) at $\xbar$.

\item\label{d:L-eps-del-regular}\cite[Definition 8.1]{BauLukPhaWan13.2}
If $  B= A$ in \eqref{d:L-eps-del-subregular} above,
then the set $ A$ is said to be \emph{$( A', \varepsilon,\delta)$-regular} at
$\xbar$.
If also $ A'=\Euclid$, then $ A$ is said to be
\emph{$(\varepsilon,\delta)$-regular} at
$\xbar$.

\item\label{d:Clarke}\cite[Definition 6.4]{RocWet98} $ A$ is \emph{Clarke regular} at $\xbar\in A$
if $\ncone{A}(\bx)=\fncone{A}(\bx)$.

\item\label{d:super-reg}\cite[Definition 4.3]{LewLukMal09}
$ A$ is \emph{super-regular} at $\xbar\in A$ if for any $\varepsilon>0$, there is a $\delta>0$ such that
  \begin{equation*}
    \ip{z-z_A}{x-z_A}\leq \varepsilon\norm{z-z_A}\|x-z_A\|
\;\; \mbox{for all}\;\;
x\in  A\cap\Ball_\delta(\xbar),
    z\in \Ball_\delta(\xbar), z_A\in P_A(z).
  \end{equation*}

\item\label{d:prox-reg}\cite[Definition 1.1]{PolRocThi00}
$ A$ is \emph{prox-regular}
at $\xbar\in A$ for $\vbar\in \ncone{A}(\xbar)$ if there exist $\varepsilon,\delta>0$ such that
\begin{equation*}
\ip{v}{x-a}\leq \frac{\varepsilon}{2}\norm{x-a}^2
\;\; \mbox{for all}\;\;
x,a \in  A\cap \Ball_\delta(\xbar),
v\in \Ball_{\delta}(\vbar) \cap \ncone{A}(a).
\end{equation*}
If $ A$ is prox-regular
at $\xbar\in A$ for
all $v\in \ncone{A}(\xbar)$,
then $ A$ is said to be prox-regular at $\xbar$.
\end{enumerate}
\end{definition}
Convexity, of course, implies all properties in Definition \ref{d:set regularities} globally.

\subsection{Regularity of collections of sets}
The origins of the concept of regular arrangement of sets in space can be traced back to that of \emph{transversality} in
differential geometry which deals of course with smooth manifolds (see, for instance, \cite{GuiPol74,Hir76}).

Given two smooth manifolds $A,B\subset\mathbb{E}$ around a point $\xbar \in A\cap B$,
an important question is whether their intersection $A\cap B$ is also a smooth manifold around $\xbar $.
For that to be true, the manifolds must intersect in a certain regular way.
The typical sufficient regularity assumption is known as \emph{transversality}:
the collection $\{A,B\}$ of smooth manifolds is
{\em transversal} at $\xbar\in A\cap B $ if
\begin{equation}\label{Tran-1}
T_A(\xbar ) + T_B(\xbar ) = \Euclid.
\end{equation}
This notion has been used in \cite{LewMal08,GuiPol74,Iof15}.
Under this assumption,
$A\cap B$ is a smooth manifold around $\xbar $ and the
following equalities hold (cf. \cite{LewMal08,GuiPol74,Iof15}):
\begin{align}\label{BasQua-}
T_{A\cap B}(\xbar ) &= T_A(\xbar ) \cap T_B(\xbar ),\\
\nsub{A}(\xbar ) &\cap \nsub{B}(\xbar )= \{0\}. \label{BasQua}
\end{align}
Equality~\eqref{BasQua-} is only a necessary condition and is in general weaker than condition~\eqref{BasQua}.
When $A$ and $B$ are convex sets, it is known as
the \emph{conical hull intersection property (CHIP)} \cite{ChuDeuWar90} (cf. \cite[Definition~5.1]{BakDeuLi05}).

\begin{example}\label{E1}
Let $\mathbb{E}=\R^2$, $A=B=\R\times\{0\}$, $\bx=(0,0)$.
Then $A\cap B=A=B$, $T_{A\cap B}(\xbar ) = T_A(\xbar ) =T_B(\xbar )$, and consequently, equality~\eqref{BasQua-} holds true.
At the same time, $T_A(\xbar )+T_B(\xbar )=\R\times\{0\}$ and $\nsub{A}(\xbar )=\nsub{B}(\xbar )=\{0\}\times\R$.
Thus condition \eqref{BasQua} is not satisfied.
\xqed\end{example}

The transversality property of a collection of two smooth manifolds can be characterized quantitatively in
terms of the angle between their tangent (or normal) spaces.
One can use for that purpose the \emph{Friedrichs} angle (cf. Deutsch \cite{Deu01}).
\begin{definition}[Friedrichs angle]
Given two nonempty subspaces $V_1$ and $V_2$, the Friedrichs angle is a number between 0 and $\pi/2$
whose cosine is given by
\begin{align}\label{Fried}
c(V_1,V_2) &:= \max\left\{\ip{v_1}{v_2}\mid v_1\in V_1\cap (V_1\cap V_2)^{\perp}\cap\B,\;
v_2\in V_2\cap (V_1\cap V_2)^{\perp}\cap\B\right\}.
\end{align}
\end{definition}

The following properties provide some insight into the geometry of the intersection
(cf. \cite[Fact~7.10]{BauLukPhaWan13.2}, \cite[Lemmas 3.2 and 3.3]{LewMal08}):
\begin{gather}\label{Fri1}
c(V_1,V_2)=c(V_1^{\perp},V_2^{\perp})<1,\\
\label{Fri2}
1-c(V_1,V_2)=\min\left\{ d^2(v,V_1)+ d^2(v,V_2)\mid \norm{v}=1\right\}
\qdtx{if}V_1\cap V_2=\{0\}.
\end{gather}

The angle between two smooth manifolds $A$ and $B$ around a point $\xbar \in A\cap B$ is
defined in \cite[Definition~3.1]{LewMal08} as the Friedrichs angle between the two tangent
subspaces $T_A(\xbar )$ and $T_B(\xbar )$, or equivalently, in view of \eqref{Fri1}, the
Friedrichs angle between the two normal subspaces $\nsub{A}(\xbar )$ and $\nsub{B}(\xbar )$:
\begin{equation}\label{e:angle}
c(A,B,\xbar ) := c(T_A(\xbar ),T_B(\xbar ))= c(\nsub{A}(\xbar),\nsub{B}(\xbar )).
\end{equation}
Observe that, with $V_1=\nsub{A}(\bx)$ and $V_2=\nsub{B}(\bx)$, condition $V_1\cap V_2=\{0\}$ in
\eqref{Fri2} is equivalent to \eqref{BasQua}, and thus, to the transversality of $\{A,B\}$ at $\xbar $.

Expanding our scope to
the general case of two closed sets $A,B\subset\mathbb{E}$ having a common point $\xbar \in A\cap B$
the intuition behind transversality plays an important role, but it is clear that
a richer vocabulary is needed to describe the many ways in which sets
can intersect `transversally'.
\begin{definition}[regularities of collections of sets]\label{D4}
Suppose $A$ and $B$ are closed sets of $\E$ and $\bx\in\bd A\cap\bd B$.
\begin{enumerate}
\item\label{D4:lin-reg}  \cite[Page 99]{ClaLedSteWol98}  The {\em transversality condition}
holds at the intersection of two sets $A$ and $B$  if
\begin{equation}\label{e:TransCond}
\ncone{A}(\xbar ) \cap (-\ncone{B}(\xbar )) = \{0\}
\end{equation}
at $\xbar\in A\cap B$.
\item\label{D4:local lin reg} \cite[Page~62]{ZheNg08} The collection of sets $\{A,B\}$ is {\em locally linearly regular} at
$\bx\in A\cap B$ if there exist numbers $\delta>0$ and $\alpha>0$
such that
\[
\alpha d\left(x,A\cap B\right)\le \max\left\{d(x,A),d(x,B)\right\}
\;\;\mbox{for all}\;\; x\in \B_{\delta}(\xbar ).
\]

\item\label{D4:CQ} \cite[Definition~6.6]{BauLukPhaWan13.2}
The \emph{$(A,B)$-qualification condition}  holds at $\bx$
if one of the following equivalent formulations holds:
\begin{enumerate}
\item
there exists a number $\alpha<1$ such that
$-\ang{v_1,v_2}<\al$
for all $v_1\in\rncone{A}{B}(\bx)\cap\Sp$ and $v_2\in\rncone{B}{A}(\bx)\cap\Sp$; or
\item
there exist numbers $\alpha<1$ and $\delta>0$ such that
$-\ang{v_1,v_2}<\al$
for all $a\in A\cap \B_{\de}(\bx)$, $b\in B\cap \B_{\de}(\bx)$, $v_1\in \rpncone{A}{B}(a)\cap\Sp$, and
$v_2\in \rpncone{B}{A}(b)\cap\Sp$.
\end{enumerate}

\item\label{D4:DIL} \cite[Definition~4.4]{DruIofLew14}
$A$ and $B$ are \emph{inherently transversal}  at
$\bx$ if  there exist numbers $\alpha<1$ and $\delta>0$ such that
$$
\ang{a_1-b_2,a_2-b_1}<\al\|a_1-b_2\|\,\|a_2-b_1\|
$$
for all $a_1\in(A\setminus B)\cap \B_{\delta}(\bx)$, $b_1\in(B\setminus A)\cap \B_{\delta}(\bx)$, $b_2\in P_B(a_1)$,
and $a_2\in P_A(b_1)$.

\item\label{D4:NolRon} \cite[Definition~1]{NolRon15}
\emph{$B$ intersects $A$ separably} at $\bx$ if
there exist numbers $\alpha<1$ and $\delta>0$ such that
$$
\ang{a_1-b,a_2-b}<\al\|a_1-b\|\,\|a_2-b\|
$$
for all $a_1\in(A\setminus B)\cap \B_{\delta}(\bx)$, $b\in(P_B(a_1)\setminus A)\cap \B_{\delta}(\bx)$,
and $a_2\in P_A(b)\cap \B_{\delta}(\bx)$.

If $A$ also intersects $B$ separably at $\bx$, then $\{A,B\}$ is said to intersect separately at $\bx$.

\item\label{D4:DIL2} \cite[Definition~3.1]{DruIofLew15}
$A$ and $B$ are \emph{intrinsically transversal} at $\bx$ if one of the following equivalent conditions holds:
\begin{enumerate}
\item
there exist numbers $\alpha>0$ and $\delta>0$ such that
\begin{equation*}
\max\biggl\{d\left(\frac{b-a}{\|a-b\|}, \ncone{A}(a)\right),
d\left(\frac{a-b}{\|a-b\|}, \ncone{B}(b)\right)\biggr\}>\al
\end{equation*}
for all $a\in(A\setminus B)\cap \B_{\delta}(\bx)$ and $b\in(B\setminus A)\cap \B_{\delta}(\bx)$; or
\item
there exist numbers $\alpha>0$ and $\delta>0$ such that
\begin{equation*}
\max\biggl\{d\left(\frac{b-a}{\|a-b\|}, \pncone{A}(a)\right),
d\left(\frac{a-b}{\|a-b\|}, \pncone{B}(b)\right)\biggr\}>\al
\end{equation*}
for all $a\in(A\setminus B)\cap \B_{\delta}(\bx)$ and $b\in(B\setminus A)\cap \B_{\delta}(\bx)$.
\end{enumerate}
\end{enumerate}
\end{definition}

Using the Euclidean space geometry, each of the properties in Definition~\ref{D4} can be reformulated equivalently in several
different ways;
see some reformulations in \cite{KruTha16} including the angle
characterization of intrinsic transversality in \cite[Proposition~19]{KruTha16}.
Analytically, this means quantifying each of these properties.
As observed in \cite[Theorem 18]{LewMal08}, for two smooth manifolds the equalities \eqref{Tran-1} and
\eqref{BasQua} are actually equivalent.
Properties \eqref{D4:lin-reg} and
\eqref{D4:local lin reg} are shown in Theorems \ref{t:MAINi} and \ref{t:MAINii} below to be equivalent
to what we call in this article (Definition \ref{D2}) {\em transversality}
and {\em subtransversality}, respectively.

A more general H\"older-type setting of property \eqref{D4:NolRon} with exponent $\omega\in [0,2)$ is considered in \cite{NolRon15}.
Definition \ref{D4}\eqref{D4:NolRon} corresponds to the `linear' case $\omega=0$.  Note also that this
notion is not symmetric: $B$ may intersect $A$ separably, but $A$ need not intersect $B$ separably.

\begin{proposition}[relations between regularities of collections]\label{t:collection relations a}
The known relationships between the properties in Definition~\ref{D4} are as follows:
\begin{enumerate}[\rm (a)]
\item\label{t:collection relations ia}
\eqref{D4:CQ} $\Rightarrow$ \eqref{D4:DIL} $\Rightarrow$ \eqref{D4:NolRon}.
\item\label{t:collection relations iia}
\eqref{D4:DIL2} $\Rightarrow$ \eqref{D4:NolRon}.
\item\label{t:collection relations iva}
Property \eqref{D4:DIL2} is in general independent of each of the properties \eqref{D4:CQ} and \eqref{D4:DIL}.
\item\label{t:collection relations va}
When both sets are super-regular (Definition \ref{d:set regularities}\eqref{d:super-reg})
at the reference point, \eqref{D4:DIL} $\Rightarrow$ \eqref{D4:DIL2}.
\end{enumerate}
\end{proposition}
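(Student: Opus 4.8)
The plan is to obtain \eqref{t:collection relations ia} by matching proximal normals, \eqref{t:collection relations iia} by a single distance estimate, \eqref{t:collection relations iva} by two planar examples, and \eqref{t:collection relations va} by a contradiction argument in which super-regularity converts limiting normals into genuine projection directions.

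For \eqref{t:collection relations ia} I would first prove \eqref{D4:CQ}$\Rightarrow$\eqref{D4:DIL} using the proximal formulation (b) of the $(A,B)$-qualification condition. Fix $a_1\in(A\setminus B)\cap\B_{\de}(\bx)$, $b_1\in(B\setminus A)\cap\B_{\de}(\bx)$, $b_2\in P_B(a_1)$ and $a_2\in P_A(b_1)$. Since $b_2\in P_B(a_1)$ and $a_1\in A$, we have $a_1\in P_B^{-1}(b_2)\cap A$, so $a_1-b_2\in\rpncone{B}{A}(b_2)$; symmetrically $b_1-a_2\in\rpncone{A}{B}(a_2)$. Setting $v_2:=(a_1-b_2)/\norm{a_1-b_2}$ and $v_1:=(b_1-a_2)/\norm{b_1-a_2}$ and inserting them into $-\ang{v_1,v_2}<\al$ reproduces, after clearing the positive norms, the inherent transversality inequality $\ang{a_1-b_2,a_2-b_1}<\al\norm{a_1-b_2}\,\norm{a_2-b_1}$. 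A routine halving of the radius places the projections in the required ball, since $\norm{a_2-\bx}\le\norm{a_2-b_1}+\norm{b_1-\bx}=\dist(b_1,A)+\norm{b_1-\bx}\le2\norm{b_1-\bx}$, and likewise for $b_2$. The implication \eqref{D4:DIL}$\Rightarrow$\eqref{D4:NolRon} is then immediate by specialization: taking $b_1=b_2=b$ with $b\in P_B(a_1)\setminus A$ and $a_2\in P_A(b)$ turns inherent transversality into $\ang{a_1-b,a_2-b}<\al\norm{a_1-b}\,\norm{a_2-b}$, which is exactly the defining inequality for ``$B$ intersects $A$ separably.''

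For \eqref{t:collection relations iia} I would use the proximal formulation (b) of intrinsic transversality. Given $a_1\in A\setminus B$, $b\in P_B(a_1)\setminus A$ and $a_2\in P_A(b)$ near $\bx$, note $(a_1-b)/\norm{a_1-b}\in\pncone{B}(b)$ and $(b-a_2)/\norm{b-a_2}\in\pncone{A}(a_2)$. When $a_2\in A\setminus B$, applying \eqref{D4:DIL2} at the pair $(a_2,b)$ annihilates the $A$-term of the maximum, so the $B$-term must exceed $\al$; bounding that distance above by $\norm{(a_2-b)/\norm{a_2-b}-(a_1-b)/\norm{a_1-b}}$ and expanding the norm square converts $\dist(\cdot)>\al$ into $\ang{a_1-b,a_2-b}<(1-\tfrac{\al^2}{2})\norm{a_1-b}\,\norm{a_2-b}$, i.e. separability with constant $1-\al^2/2<1$. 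The only configuration this leaves open is $a_2\in A\cap B$, where \eqref{D4:DIL2} is not assumed; since the crude proximal bound $\ang{a_1-b,a_2-b}\le\tfrac12\norm{a_2-b}^2$ need not produce a constant below one, this degenerate case --- the return projection landing back on $A\cap B$ --- must be excluded by a separate limiting argument and is the one delicate point here.

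For \eqref{t:collection relations iva} I would produce two explicit collections in $\R^2$. In one direction I would arrange sets admitting a pair of oppositely directed restricted normals, so that \eqref{D4:CQ} (hence \eqref{D4:DIL}) fails, yet for every genuine pair $a\in A\setminus B$, $b\in B\setminus A$ near $\bx$ at least one directional distance in \eqref{D4:DIL2} stays bounded away from zero, so that intrinsic transversality holds; the point is that the maximum in \eqref{D4:DIL2} is a weaker, one-sided requirement than the joint non-opposition demanded by \eqref{D4:CQ}. For the converse I would exploit \eqref{t:collection relations va}: since \eqref{D4:CQ}$\Rightarrow$\eqref{D4:DIL} and, under super-regularity, \eqref{D4:DIL}$\Rightarrow$\eqref{D4:DIL2}, any example with \eqref{D4:CQ} holding but \eqref{D4:DIL2} failing must violate super-regularity, so I would take a set with an oscillating (non-super-regular) boundary accumulating at $\bx$ whose wild limiting normals destroy \eqref{D4:DIL2} while the restricted normals in \eqref{D4:CQ} remain non-opposite. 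Suitable prototypes are in \cite{DruIofLew14,DruIofLew15} and \cite{KruTha16}.

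Finally, for \eqref{t:collection relations va} I would argue by contradiction. Assume both sets are super-regular at $\bx$ and that \eqref{D4:DIL} holds with constant $\al<1$ on $\B_{\de}(\bx)$, but \eqref{D4:DIL2} fails. Then there are $a_k\in A\setminus B$, $b_k\in B\setminus A$ with $a_k,b_k\to\bx$ and, writing $e_k:=(b_k-a_k)/\norm{a_k-b_k}$, limiting normals $u_k\in\ncone{A}(a_k)$, $w_k\in\ncone{B}(b_k)$ with $\norm{u_k-e_k}\to0$ and $\norm{w_k+e_k}\to0$; the normals are asymptotically anti-aligned. The heart of the proof is to replace these limiting normals by directions of actual projections. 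Choosing $a_2^k\in P_A(b_k)$ and writing $b_k-a_2^k=\norm{a_k-b_k}\,e_k+(a_k-a_2^k)$, the proximal inequality for $a_2^k\in P_A(b_k)$ together with the super-regularity estimate $\ang{u_k,a_2^k-a_k}\le\eps\norm{u_k}\,\norm{a_2^k-a_k}$ (valid for limiting normals by Definition~\ref{d:set regularities}\eqref{d:super-reg}, the cascade of Theorem~\ref{t:set regularity equivalences}, and a limiting passage) forces $\ang{(b_k-a_2^k)/\norm{b_k-a_2^k},e_k}\ge1-2\eps-o(1)$, so $(b_k-a_2^k)/\norm{b_k-a_2^k}$ is as close to $e_k$ as we like once $\eps$ is small and $k$ large; symmetrically $b_2^k\in P_B(a_k)$ yields $(a_k-b_2^k)/\norm{a_k-b_2^k}$ arbitrarily close to $-e_k$. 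Feeding $a_1=a_k$, $b_1=b_k$, $a_2=a_2^k$, $b_2=b_2^k$ into \eqref{D4:DIL}, the normalized product $\ang{a_1-b_2,a_2-b_1}/(\norm{a_1-b_2}\,\norm{a_2-b_1})$ tends to $\ang{-e_k,-e_k}=1$, since both $a_1-b_2$ and $a_2-b_1$ align with $-e_k$, contradicting the inherent transversality bound, which keeps this quantity below $\al<1$. The main obstacle is precisely this alignment step: super-regularity does not make projections single-valued, so one must run the quantifiers in the correct order --- fix $\al$, then pick the super-regularity tolerance $\eps$ (which fixes a radius), and only then let $a_k,b_k$ enter that neighborhood --- while checking that the selected projections stay nearby, which follows from $\norm{b_k-a_2^k}=\dist(b_k,A)\le\norm{a_k-b_k}$ and its symmetric counterpart.
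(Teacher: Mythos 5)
Your overall route is sound and, for parts \eqref{t:collection relations ia}, \eqref{t:collection relations iia} and \eqref{t:collection relations va}, considerably more self-contained than the paper's own proof, which disposes of \eqref{t:collection relations ia} and \eqref{t:collection relations iia} with the remark that they follow from the description of the properties in Definition~\ref{D4}, and handles \eqref{t:collection relations iva} and \eqref{t:collection relations va} purely by citation (to \cite[Examples~23 and 24]{KruTha16} and to \cite[Proposition~4.5]{DruIofLew14}, respectively). Your part \eqref{t:collection relations ia} is exactly the intended unfolding of the definitions: the identifications $a_1-b_2\in\rpncone{B}{A}(b_2)$ and $b_1-a_2\in\rpncone{A}{B}(a_2)$, the halving of the radius via $d(b_1,A)\le\norm{b_1-\bx}$, and the specialization $b_1=b_2=b$ for \eqref{D4:DIL}$\Rightarrow$\eqref{D4:NolRon} are all correct. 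Your part \eqref{t:collection relations va} is a faithful reconstruction of the cited Drusvyatskiy--Ioffe--Lewis argument: the alignment estimate $\ip{(b_k-a_2^k)/\norm{b_k-a_2^k}}{e_k}\ge 1-2\eps-o(1)$ does follow from super-regularity in the form \eqref{Th_ii} (which the paper states directly for limiting normals, so no extra limiting passage is needed), using $\norm{b_k-a_2^k}=d(b_k,A)\le\norm{a_k-b_k}$ and $\norm{a_k-a_2^k}\le2\norm{a_k-b_k}$, and your quantifier order (fix $\al$, then the super-regularity tolerance $\eps$, then let the points enter the resulting neighborhood) is the right one. For \eqref{t:collection relations iva} you, like the paper, ultimately defer to the literature; that is acceptable, but note you have not actually produced the two examples, whereas the paper pins them down as \cite[Examples~23 and 24]{KruTha16}.

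The one genuine gap is the case you flagged in part \eqref{t:collection relations iia}, and while it is real as written, it closes in one line rather than requiring ``a separate limiting argument''. If $a_2\in P_A(b)\cap B$ (in particular if $a_2\in A\cap B$), apply the variational characterization of $b\in P_B(a_1)$ with the test point $a_2\in B$: this gives $\ip{a_1-b}{a_2-b}\le\tfrac12\norm{a_2-b}^2$. Since $a_1\in A$, one also has $\norm{a_2-b}=d(b,A)\le\norm{a_1-b}$, whence $\ip{a_1-b}{a_2-b}\le\tfrac12\norm{a_1-b}\,\norm{a_2-b}$, and $\norm{a_1-b}\,\norm{a_2-b}>0$ because $a_1\notin B$ and $a_2\in A$ while $b\notin A$. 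So the degenerate configuration satisfies the separability inequality automatically with constant $\tfrac12$; your worry that the crude proximal bound ``need not produce a constant below one'' overlooks precisely the estimate $\norm{a_2-b}\le\norm{a_1-b}$. Combining with the constant $1-\al^2/2$ from your main case (where one may assume $0<\al<1$, so that $1-\al^2/2>\tfrac12$) yields separability with the single constant $1-\al^2/2<1$, completing \eqref{t:collection relations iia}.
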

\begin{proof}
Implications {\em \eqref{t:collection relations ia}}
and {\em \eqref{t:collection relations iia}} follow from the description of the properties in
Definition \ref{D4}.
The observation  {\em \eqref{t:collection relations iva}} was demonstrated in \cite[Examples 23 and 24]{KruTha16}.
Implication {\em \eqref{t:collection relations va}} was shown in \cite[Proposition~4.5]{DruIofLew14}.
\qed
\end{proof}


\subsection{Convergence results}
We catalog next some of the main nonconvex convergence results making use of certain combinations of the
above regularities.
\begin{proposition}[linear convergence of alternating projections]\label{t:AP}
For $A$ and $B$ closed with nonempty intersection and $\bx\in A\cap B$, the alternating projections
algorithm converges locally linearly if one of the following collection of conditions holds.
 \begin{enumerate}[\rm (i)]
  \item\label{t:LM} \cite[Theorem 4.3]{LewMal08} $A$ and $B$ are smooth manifolds around $\xbar$ and
  $\{A,B\}$ is transversal at $\xbar$.
  \item\label{t:LLM} \cite[Theorem 5.16]{LewLukMal09} $A$ is super-regular at $\xbar$ and
 the transversality condition \eqref{e:TransCond}
holds at $\xbar$.
  \item\label{t:BLPW} \cite[Theorem 3.17]{BauLukPhaWan13.2} $A$ is $(B,\varepsilon,\delta)$-regular at
  $\xbar$ and the $(A,B)$-qualification condition holds at $\xbar$.
  \item\label{t:LH} \cite[Theorem 3.11]{HesLuk13} $A$ and $B$ are $(\varepsilon,\delta)$-subregular
  relative to $A\cap B$ at $\xbar$ and $\{A,B\}$
is locally linearly regular
at $\xbar$.
  \item\label{t:DIL} \cite[Theorem 2.3]{DruIofLew14} $\{A,B\}$ is intrinsically transversal
at $\xbar$.
  \item\label{t:NollRon} \cite[Theorem 2]{NolRon15}
$A$ is $0$-H\"older regular relative to $B$ at $\xbar$ and $\{A,B\}$ intersects separably at $\xbar$.
 \end{enumerate}
\end{proposition}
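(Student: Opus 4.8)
Since each assertion \eqref{t:LM}--\eqref{t:NollRon} is an exact restatement of a theorem established in the cited source, the proof in the strict sense consists of matching our notation and hypotheses to those of the respective references and invoking them. What is worth recording, however, is the common architecture shared by all six proofs, since it explains why the two ingredients appearing in each item---an \emph{elemental} regularity of the individual sets and a \emph{transversal} regularity of the collection $\{A,B\}$---are precisely the quantities one must control. The plan is therefore to present the unified template and then to indicate how each case specializes it.

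The template in every case is a one-step contraction estimate. For a current iterate $x$ near $\xbar$ with projections $a\in P_A(x)$ and $b\in P_B(a)$, one seeks a constant $c<1$ and a radius $\de>0$ with
\[
d(b,A\cap B)\le c\,d(x,A\cap B)\qquad\text{whenever } x\in\B_\de(\xbar),
\]
which upon iteration yields $R$-linear convergence with rate $c$. The transversal regularity supplies the \emph{leading} contraction factor: in the smooth case \eqref{t:LM} the Friedrichs angle $c(A,B,\xbar)<1$ from \eqref{e:angle} bounds the cosine of the angle between successive projection directions, while in the nonsmooth cases \eqref{t:LLM}--\eqref{t:NollRon} the corresponding dual and angle conditions (the transversality condition \eqref{e:TransCond}, the $(A,B)$-qualification condition, local linear regularity, intrinsic transversality, and separable intersection) play the same role through the normal-cone inequalities defining them. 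The elemental regularity then supplies the \emph{error control}: super-regularity, $(\eps,\de)$-(sub)regularity, and $\sigma$-H\"older regularity each furnish an inequality of the form $\ip{v}{b-a}\le \eps\norm{v}\norm{b-a}$ (up to a higher-order term), bounding the deviation of the actual projection from the idealized transversal geometry. Choosing $\de$ small makes this error negligible against the gap $1-c$ left by transversality, so the two estimates combine into a contraction factor strictly below $1$.

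I would organize the argument so that \eqref{t:LM} is treated first as the model case---here tangent and normal \emph{spaces} replace cones, the projections are locally single-valued, and \eqref{Fri2} converts the angle condition directly into a quadratic contraction estimate---and then treat \eqref{t:LLM}--\eqref{t:NollRon} as successive relaxations, replacing subspaces by cones and exact orthogonality by the $\eps$-perturbed inequalities above. The main obstacle is uniformity: in the nonconvex setting the projectors are set-valued and need not be continuous, so the contraction must hold simultaneously for \emph{every} selection $a\in P_A(x)$ and $b\in P_B(a)$ throughout $\B_\de(\xbar)$, not merely at $\xbar$. Securing this requires that the elemental and transversal inequalities hold on an entire neighborhood---which is exactly why the definitions in Definition~\ref{d:set regularities} and Definition~\ref{D4} quantify over $a,b\in\B_\de(\xbar)$ rather than only at the reference point---and verifying that the constants $c$, $\eps$, and $\de$ can be chosen jointly is the delicate step that each cited reference carries out within its own technical framework.
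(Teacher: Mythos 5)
Your proposal matches the paper's treatment exactly: Proposition~\ref{t:AP} is presented there as a catalog of known results, each item established simply by citation to the indicated reference, which is precisely the ``strict sense'' proof you give. Your unified contraction template is accurate supplementary commentary rather than part of the paper's argument, with the one minor caveat that item~(v) carries a single hypothesis --- intrinsic transversality --- which encodes both the elemental and transversal ingredients at once (cf.\ the paper's later remark on entanglement and \cite[Proposition 8]{NolRon15}), so the claim that \emph{each} item pairs two separate regularities is not literally exact there.
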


\begin{proposition}[linear convergence of the Douglas-Rachford algorithm]\label{t:DR}
 For $A$ and $B$ closed with nonempty intersection and $\bx\in A\cap B$, the Douglas-Rachford algorithm
 converges locally linearly if one of the following collection of conditions holds.
 \begin{enumerate}[\rm (i)]
  \item\label{t:LH_DR} \cite[Theorem 3.18]{HesLuk13}
$\{A,B\}$ satisfies the transversality condition \eqref{e:TransCond}
at $\xbar$, the set $B$ is an affine subspace, and $A$ is
$(\varepsilon,\delta)$-subregular relative to $A\cap B$ at $\xbar$.
  \item\label{t:Phan} \cite[Theorem 4.3]{Pha15}
$\{A,B\}$ satisfies the transversality condition \eqref{e:TransCond}
at $\xbar$, and $A$ and $B$ are
$(\varepsilon,\delta)$-sub\-regular at $\xbar$.
  \end{enumerate}
\end{proposition}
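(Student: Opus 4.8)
The two assertions are, in the terminology of this article, restatements of the cited results, so the first task is simply to match the hypotheses above with those of \cite[Theorem~3.18]{HesLuk13} and \cite[Theorem~4.3]{Pha15}. The plan is therefore to recall the common mechanism driving both proofs and to indicate where the two regularity ingredients — the transversality condition \eqref{e:TransCond} and the $(\varepsilon,\delta)$-subregularity of the sets (Definition~\ref{d:set regularities}\eqref{d:L-eps-del-subregular}) — enter. Write the reflectors $R_A := 2P_A-\Id$ and $R_B := 2P_B-\Id$ and the Douglas--Rachford operator $T := \tfrac{1}{2}(\Id + R_B R_A)$, so that the iteration is $x_{k+1}\in T(x_k)$.

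First I would record the elementary facts that the projectors are well defined and, once an elemental regularity is imposed, locally single-valued near $\bx$, and that $P_A$ maps the fixed points of $T$ lying near $\bx$ into $A\cap B$, so that the shadow sequence $P_A(x_k)$ is the object one ultimately controls. The heart of the argument is a one-step estimate of the form
\[
d(x_{k+1}, \Fix T) \le c\, d(x_k, \Fix T), \qquad c<1,
\]
valid for $x_k$ in a neighbourhood of $\bx$; linear convergence of the iterates (equivalently, of the shadows) to a fixed point of $T$, whose $A$-projection lies in $A\cap B$, then follows by iterating and passing to the limit.

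To produce such an estimate I would expand $\norm{x_{k+1}-x^\ast}^2$ for a suitable fixed point $x^\ast$ and split the displacement into the contribution from the projection onto $A$ and that from the projection onto $B$. The $(\varepsilon,\delta)$-subregularity is used precisely to bound inner products $\langle v, b-a\rangle$ by $\varepsilon\norm{v}\norm{b-a}$, converting geometric proximity of the iterates into quantitative control of the reflection error; the transversality condition \eqref{e:TransCond} supplies, through the angle between $\ncone{A}(\bx)$ and $-\ncone{B}(\bx)$, the strict separation that forces $c$ strictly below $1$. For item~\eqref{t:LH_DR} the affine structure of $B$ makes $P_B$, hence $R_B$, an isometry, so $R_BR_A$ inherits the good behaviour of $R_A$ alone and only $A$ need carry an elemental regularity; for item~\eqref{t:Phan} the same scheme is run symmetrically, with the subregularity of both $A$ and $B$ absorbing the two reflection errors.

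The main obstacle is that, without convexity, $T$ is not (firmly) nonexpansive, so the Fej\'er-monotone machinery of the convex theory is unavailable. One must instead control the set-valuedness of the projectors and the error terms produced by subregularity simultaneously, shrinking $\delta$ so that the neighbourhood in which \eqref{e:TransCond} and the $(\varepsilon,\delta)$-subregularity estimate both apply is consistent with the region the iterates never leave. Verifying this invariance — that the iterates stay in the prescribed ball so that the local estimates continue to hold — is the delicate step, and it is exactly where the affine assumption in \eqref{t:LH_DR} buys a substantial simplification over the general case \eqref{t:Phan}.
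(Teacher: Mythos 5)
The paper offers no proof of this proposition beyond the citations to \cite[Theorem~3.18]{HesLuk13} and \cite[Theorem~4.3]{Pha15} --- it is a catalog entry in the survey of convergence results --- and your opening observation that the task reduces to matching the stated hypotheses with those of the cited theorems is exactly the paper's own treatment. Your sketch of the internal mechanism (a one-step contraction estimate on the distance to $\Fix T$, with the $(\varepsilon,\delta)$-subregularity controlling the reflection errors, the transversality condition \eqref{e:TransCond} forcing the contraction factor below one, and the isometry of $R_B$ in the affine case explaining why only $A$ needs elemental regularity in \eqref{t:LH_DR}) is a faithful outline of the cited proofs, so nothing is missing relative to what the paper itself establishes.
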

\section{Elemental set regularity}\label{Reg_of_sets}
Elemental (sub)regularity defined next provides a unifying framework for the other notions
of set regularity given in Definition \ref{d:set regularities}.
\begin{definition}[elemental regularity of sets]\label{d:set regularity}
Let $\emptyset \neq A\subset\Euclid$, $\bx\in \E$ and $(a,v)\in \gph{\ncone{A}}$.
\begin{enumerate}
   \item\label{d:geom subreg} $A$ is
   {\em elementally subregular of order $\sigma$ relative to $B\subset \E$ at $\xbar$ for $\paren{a,v}$
   with constant $\varepsilon$}
if there exists a neighborhood $U$ of $\xbar$
such that
\begin{equation}\label{e:geom subreg}
\ip{v - \paren{b-b_A}}{b_A - a}\leq
\varepsilon\norm{v-\paren{b-b_A}}^{1+\sigma}\norm{b_A-a}\;\;\mbox{for all}\;\; b\in B \cap U, b_{A}\in P_{A}(b).
\end{equation}

   \item\label{d:uni geom subreg} $A$
is
{\em uniformly} elementally subregular
of order $\sigma$ relative to $B\subset \E$ at $\xbar$ for
$\paren{a,v}$
if for any $\varepsilon>0$, there is a neighborhood $U$
of $\xbar$  such that \eqref{e:geom subreg} holds.

   \item\label{d:geom reg}
$A$ is
{\em elementally regular of order $\sigma$ at $\xbar$ for $\paren{a,v}$
with constant $\varepsilon$} if there exists a neighborhood $V$ of $v$ such that, for all
$u\in\ncone{A}(a)\cap V$, $A$ is elementally subregular of order $\sigma$ relative to $B:=A$
at $\xbar$ for $\paren{a,u}$ with constant $\varepsilon$.

   \item
$A$ is
{\em uniformly} elementally regular of order $\sigma$
at $\xbar$ for $\paren{a,v}$
if there exists a neighborhood $V$ of $v$ such that, for all $u\in\ncone{A}(a)\cap V$, $A$ is uniformly
elementally subregular of order $\sigma$ relative to $B:=A$ at $\xbar$ for $\paren{a,u}$.
\end{enumerate}
If $B=\{\xbar\}$ in \eqref{d:geom subreg} and \eqref{d:uni geom subreg}, then the respective qualifier,
``relative to'' is dropped.
If $\sigma=0$, then the respective qualifier, ``of order''
is dropped in the description of the properties.
The {\em modulus of elemental (sub)regularity} is the infimum over all
$\varepsilon$ for which \eqref{e:geom subreg} holds.
\end{definition}

In all properties in Definition \ref{d:set regularity},
$\xbar$ need not be in $B$ and $a$ need not be in $U$,
although these are the main cases of interest for us.
When $\sigma=0$, the properties are trivial for any constant $\varepsilon\ge 1$, so the only
case of interest is elemental (sub)regularity with constant $\varepsilon<1$.

\begin{example}\label{eg:regularity examples}$~$
\begin{enumerate}[(a)]
 \item\label{eg:cross}  A \emph{cross}:
 \[
    A=\mathbb{R}\times\{0\}\cup \{0\}\times\mathbb{R}.
 \]
This example is of particular interest for the study of {\em sparsity constrained optimization.}
$ A$ is elementally regular at any $\xbar\neq 0$, say $\|\xbar\|>\delta>0$, for all
$(a,v)\in \gph{\ncone{A}}$ where $a\in \Ball_{\delta}(\xbar)$
with constant $\varepsilon=0$ and neighborhood $\Ball_\delta(\xbar)$.
The set $ A$ is not elementally regular at $\xbar=0$ for any $(0,v)\in \gph{\ncone{A}}$
since $\ncone{A}(0)= A$.
However, $ A$ is elementally subregular at $\xbar=0$ for all $(a,v)\in\gph{\ncone{A}}$
with constant $\varepsilon=0$ and neighborhood $\E$ since all vectors $a\in A$ are orthogonal to $\ncone{A}(a)$. \\
%

\item\label{eg:circle} A circle:
 \[
  A=\set{ (x_1,x_2)\in\R^2}{ x_1^2+x_2^2=1}.
 \]
This example is of particular interest for the \emph{phase retrieval problem}.
The set $ A$ is uniformly elementally regular at any $\xbar\in  A$ for all $(\xbar,v)\in \gph{\ncone{A}}$.
Indeed, note first that for any $\xbar \in  A$, $\ncone{A}(\xbar)$ consists of the line passing through the origin and $\xbar$.
Now, for any $\varepsilon \in (0,1)$, we choose $\delta = \varepsilon$.
Then for any $x\in  A\cap \Ball_{\delta}(\xbar)$, it holds $\cos\angle(-\xbar,x-\xbar)\le \delta\le \varepsilon$.
Hence, for all $x\in  A\cap \Ball_{\delta}(\xbar)$ and $v\in \ncone{A}(\xbar)$,
 \[
   \ip{v}{x-\xbar} = \cos\angle(v,x-\xbar)\|v\|\|x-\xbar\| \leq \cos\angle(-\xbar,x-\xbar)\|v\|\|x-\xbar\| \le \varepsilon\|v\|\|x-\xbar\|.
 \]

%
\item\label{eg:Pie1r}
\begin{eqnarray*}
  A&=&\set{ (x_1,x_2)\in\R^2}{ x_1^2+ x_2^2\leq 1,\, x_1\leq |x_2|}\subset\R^2,\\
   B&=&\set{ (x_1,x_2)\in\R^2}{x_1^2+ x_2^2\leq 1, x_2 \le x_1 \le 2x_2}\subset\R^2.
\end{eqnarray*}
The set $ A$ is elementally subregular relative to $B$ at $\xbar = 0$ for all
$(a,v)\in \gph{\rncone{A}{B}}$ with constant $\varepsilon=0$ and neighborhood
$\E$ since for all $b\in B$, $b_A\in P_{A}(b)$ and $v\in \rncone{A}{B}(a)$, there holds
\[
 \ip{v-(b-b_A)}{b_A-a} = \ip{v}{b_A-a}-\ip{b-b_A}{b_A-a} = 0.
\]
The set $A$, however, is not elementally regular at $\xbar=0$ for any $(0,v)\in\gph{\ncone{A}}$
because by choosing $x=tv \in  A$ (where $0\neq v\in  A\cap \ncone{A}(0)$, $t\downarrow 0$),
we get $\ip{v}{x}=\|v\|\|x\|>0$.
%
\end{enumerate}
\xqed
\end{example}

\begin{proposition}\label{t:set regularity equivalences}
Let $A$, $A'$ and $B$ be closed nonempty subsets of $\E$.
\begin{enumerate}
\item\label{t:Hoelder char}
Let $ A\cap B\neq \emptyset$ and
suppose that there is a neighborhood $W$ of
$\xbar\in A\cap B$ and a constant $\varepsilon>0$ such that
for each
\begin{equation}\label{e:Hoelder normals}
(a,v)\in V\equiv \set{(b_A,u)\in \gph{\pncone{A}}}
{u=b-b_A,
       \begin{array}{c}\mbox{ for }b \in  B\cap W\\
		      \mbox{ and }b_A \in P_A(b)\cap W
       \end{array}},
\end{equation}
it holds that
\begin{equation}\label{e:xbar neighborhood}
\xbar\in\intr U(a,v) \mbox{ where } U(a,v)\equiv\Ball_{(1+\varepsilon^2)\|v\|}(a+v).
\end{equation}
Then, $A$ is $\sigma$-H\"older regular relative to $B$ at $\xbar$
with constant $c=\varepsilon^2$ and neighborhood $W$ of $\xbar$
if and only if $ A$ is elementally subregular of order $\sigma$ relative to
$ A\cap P^{-1}_B\paren{a+v}$ at $\xbar$
for each $(a,v)\in V$ with constant $\varepsilon=\sqrt{c}$ and the respective neighborhood
$U(a,v)$.


\item\label{t:L-eps-del-subreg char}
Let $B\subset A$.  The set $A$ is $(A',\varepsilon,\delta)$-subregular relative to $B$ at
$\xbar\in  A$ if and only if $ A$ is
elementally subregular relative to $B$ at $\xbar$ for all
$(a,v)\in \gph{\rpncone{ A}{ A'}}$ where $a\in \Ball_{\delta}(\xbar)$
with constant $\varepsilon$ and neighborhood $\Ball_{\delta}(\xbar)$.
Consequently, $(\varepsilon,\delta)$-subregularity implies 0-H\"older regularity.

\item\label{t:L-eps-del-regular equivalence}
The set $ A$ is $( A',\varepsilon,\delta)$-regular at $\xbar\in A$ if and only if $ A$ is
elementally subregular relative to $ A$ at $\xbar$ for all
$(a,v)\in \gph{\rpncone{ A}{ A'}}$ where $a\in \Ball_{\delta}(\xbar)$ with constant
$\varepsilon$ and neighborhood $\Ball_{\delta}(\xbar)$.
If $ A$ is $( A',\varepsilon,\delta)$-regular at $\xbar$, then $ A$ is elementally regular at $\xbar$
for all $(\xbar,v)$ with constant $\varepsilon$, where $0\neq v\in \rpncone{ A}{ A'}(\xbar)$.

\item\label{t:Clarke char}
The set $ A$ is Clarke regular at $\xbar\in  A$ if and only if $ A$ is
uniformly elementally regular at $\xbar$ for all $(\xbar,v)$ with $v\in\ncone{A}(\xbar)$.
Consequently, Clarke regularity implies $(\varepsilon,\delta)$-regularity.

\item\label{t:super-regular equivalence}
The set $ A$ is super-regular at $\xbar\in  A$
if and only if for any $\varepsilon>0$, there is a $\delta>0$
such that $A$ is elementally regular at $\xbar$ for all
$(a,v)\in \gph{\ncone{A}}$ where
$a\in \Ball_{\delta}(\xbar)$
with constant $\varepsilon$ and neighborhood $\Ball_{\delta}(\xbar)$.
Consequently, super-regularity implies Clarke regularity.

%
\item\label{t:prox-regular equivalence} If $ A$ is prox-regular at $\xbar$, then
there exist positive constants $\overline{\varepsilon}$ and $\deltabar$
such that, for any $\varepsilon>0$ and $\delta:=\frac{\varepsilon\deltabar}{\overline{\varepsilon}}$ defined correspondingly,
$ A$ is elementally regular at $\xbar$ for all
$(a,v)\in\gph{\ncone{A}}$ where $a\in \Ball_\delta(\xbar)$ with constant $\varepsilon$ and neighborhood $\Ball_\delta(\xbar)$.
Consequently, prox-regularity implies super-regularity.

\item\label{t: convex set regularity} If $ A$ is convex then it is
elementally regular at all $x \in A$ for all
$(a,v)\in \gph{\ncone{A}}$ with constant $\varepsilon=0$
and the neighborhood $\E$ for both $x$ and $v$.
\end{enumerate}
\end{proposition}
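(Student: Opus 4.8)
The plan is to verify directly that convexity forces the defining inequality \eqref{e:geom subreg} to hold with $\varepsilon=0$, in the strongest possible form: for \emph{every} $x\in A$, every $(a,v)\in\gph{\ncone{A}}$, and the global neighborhood $U=\E$. Since the assertion is about elemental \emph{regularity} (not merely subregularity), I would unwind Definition~\ref{d:set regularity}\eqref{d:geom reg}: I must show that for each $(a,v)\in\gph{\ncone A}$ there is a neighborhood $V$ of $v$ such that for \emph{all} $u\in\ncone{A}(a)\cap V$ the set $A$ is elementally subregular relative to $B:=A$ at the given point for $(a,u)$ with constant $0$. Taking $V=\E$, this reduces to proving that elemental subregularity relative to $A$ holds with $\varepsilon=0$ and $U=\E$ for \emph{every} normal $u\in\ncone{A}(a)$, which subsumes the regularity claim.

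The key step is the convex normal-cone inequality. When $A$ is convex, by \eqref{CNC} the three cones coincide, and for any $(a,u)\in\gph{\ncone A}$ one has $\ip{u}{y-a}\le 0$ for all $y\in A$. With $B:=A$, the points appearing in \eqref{e:geom subreg} are $b\in A\cap U=A$ and $b_A\in P_A(b)$; but since $b\in A$ already, $P_A(b)=\{b\}$ by convexity (the projector onto a closed convex set is single-valued and fixes its own points), so $b_A=b$ and hence $b-b_A=0$. The left-hand side of \eqref{e:geom subreg} therefore collapses to
\[
\ip{u-(b-b_A)}{b_A-a}=\ip{u}{b-a}\le 0=\varepsilon\norm{u}^{1+\sigma}\norm{b-a},
\]
with $\varepsilon=0$, where the inequality is exactly the convex normality relation applied to $b\in A$. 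This holds for every $b\in A$, every $u\in\ncone A(a)$, and with $U=\E$, which is precisely the required uniform bound.

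There is essentially no hard obstacle here; the only point demanding a moment's care is the reduction from \emph{regularity} to \emph{subregularity for all nearby normals}, i.e.\ confirming that choosing the neighborhood $V=\E$ of $v$ in Definition~\ref{d:set regularity}\eqref{d:geom reg} is legitimate and that the subregularity estimate indeed holds for \emph{every} $u\in\ncone A(a)$, not just the fixed $v$. This is immediate from the computation above since the bound never used any particular feature of $v$ and held for all normals simultaneously. The statement covers all $x\in A$ because the argument is independent of the base point, so the same $\varepsilon=0$ and global neighborhood $\E$ work uniformly, completing the proof.
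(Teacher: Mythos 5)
Your argument for part \eqref{t: convex set regularity} is correct and follows essentially the paper's own route: both proofs reduce the claim to the convex normality inequality $\ip{v}{x-a}\le 0$ for all $x\in A$ and $(a,v)\in\gph{\ncone{A}}$, observe that with $B:=A$ every $b\in B\cap U$ satisfies $b_A=b$ so the term $b-b_A$ in \eqref{e:geom subreg} vanishes, and then take the neighborhoods $U=\E$ and $V=\E$ so that the subregularity estimate holds with $\varepsilon=0$ simultaneously for every normal $u\in\ncone{A}(a)$, which is exactly what Definition~\ref{d:set regularity}\eqref{d:geom reg} demands. The only cosmetic difference is how the normality inequality is obtained: the paper invokes the Chebyshev characterization of convexity (global single-valuedness of the projector) together with the identification $\set{z-a}{z\in P_A^{-1}(a)}=\ncone{A}(a)=\pncone{A}(a)$, while you quote the convex-analysis description \eqref{CNC} of the (coinciding) cones directly; the two are equivalent. (A small inaccuracy: $P_A(b)=\{b\}$ for $b\in A$ holds for \emph{any} closed set and does not require convexity, but this is harmless to the argument.)

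The genuine defect is coverage. The statement is the whole of Proposition~\ref{t:set regularity equivalences}, which comprises seven assertions, and your proposal addresses only the final one. Parts \eqref{t:Hoelder char}--\eqref{t:prox-regular equivalence} are untouched, and none of them can be extracted from your computation: \eqref{t:Hoelder char} requires translating the H\"older condition into statements about the pairs $(a,v)\in V$ of \eqref{e:Hoelder normals} and using assumption \eqref{e:xbar neighborhood} to pass between the neighborhoods $W$ and $U(a,v)$; \eqref{t:L-eps-del-regular equivalence} needs the rescaling argument that produces the neighborhood $V=\set{u\in \E}{(2+\varepsilon)\|u-v\|\le \varepsilon\|u\|}$ of a nonzero proximal normal and the variational characterization of the projector; \eqref{t:super-regular equivalence} rests on \cite[Proposition 4.4]{LewLukMal09}; and \eqref{t:prox-regular equivalence} requires converting the quadratic estimate of prox-regularity into a linear one by trading the localization radius $\delta=\varepsilon\deltabar/\overline{\varepsilon}$ against the constant $\varepsilon$. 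As a proof of the proposition as stated, the proposal is therefore incomplete, even though the fragment it does contain is sound and matches the paper.
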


\begin{proof}
\eqref{t:Hoelder char}.
The set $A$ is  $\sigma$-H\"older regular at $\xbar $ relative to $B$ with constant
$c=\varepsilon^2$ and neighborhood $W$
if and only if
$$
\ip{b-b_A}{x-b_A}\le \varepsilon\|b-b_A\|^{1+\sigma}\|x-b_A\|
$$
holds for each $b\in B\cap W$, $b_A\in P_{A}(b)\cap W$, and
$x\in A\cap P_{B}^{-1}(b)\cap \Ball_{(1+\varepsilon^2)\|b-b_A\|}(b)$.
But this is equivalent to
$$
\ip{v}{x-a}\le \varepsilon\|v\|^{1+\sigma}\|x-a\|
$$
holding for each $(a,v)\in V$ and $x\in \paren{A\cap P_{B}^{-1}(a+v)}\cap U(a,v)$.
Thanks to assumption \eqref{e:xbar neighborhood}, this is equivalent to $ A$ being elementally
subregular of order $\sigma$ relative to $A\cap P_{B}^{-1}(a+v)$ at $\xbar$ for each
$(a,v)\in V$ with constant $\varepsilon$ and neighborhood $U(a,v)$.\\
%
%

\eqref{t:L-eps-del-subreg char}.
The set  $A$ is  $( A', \varepsilon, \delta)$-subregular relative to $B\subset A$ at $\xbar$
if and only if \eqref{eq:epsilondeltasubregularity} holds for all $b\in B\cap \Ball_{\delta}(\bx)$, $a\in A\cap \Ball_\delta(\xbar)$
and $v\in\rpncone{A}{A'}(a)$.
In other words, $A$ is elementally
subregular relative to $B$ at $\xbar$ for all $(a,v)\in \gph{\rpncone{ A}{ A'}}$ where $a\in \Ball_{\delta}(\xbar)$
with constant $\varepsilon$ and neighborhood $\Ball_\delta(\xbar)$. \\
%

\eqref{t:L-eps-del-regular equivalence}.
The first part is a particular case of \eqref{t:L-eps-del-subreg char} for $B=A$.
For the latter part, we suppose $ A$ is $(A',\varepsilon,\delta)$-regular at $\xbar$ and let
$0\neq v\in \rpncone{A}{A'}(\xbar)$.
%
%
We can assume without loss of generality that
$\xbar\in P_{A}(z)$ for $z\equiv\xbar+v$
(otherwise one could rescale $v$ so that this holds).
From the variational characterization of the Euclidean projector, $\xbar\in P_{ A}(z)$ if and only if
\[
 \ip{z-\xbar}{x-\xbar}\leq \frac{1}{2}\norm{x-\xbar}^2\;\; \mbox{for all}\;\; x\in A.
\]
In particular,
\[
  	\ip{v}{x-\xbar}\leq \frac{\varepsilon}{2}\norm{v}\norm{x-\xbar}\;\; \mbox{for all}\;\;
  	x\in  A\cap \Ball_{\deltabar}(\xbar)\quad (\mbox{where } \deltabar=\varepsilon\norm{v}>0).
\]
Consider the neighborhood $V:= \set{u\in \E}{(2+\varepsilon)\|u-v\|\le \varepsilon\|u\|}$ of $v$.
Then the claim holds since, for all $x\in A\cap \Ball_{\deltabar}(\xbar)$ and $u\in V$, one has
\begin{align*}
\ip{u}{x-\xbar} =\, & \ip{v}{x-\xbar} + \ip{u-v}{x-\xbar}\\
\le\, & \frac{\varepsilon}{2}\norm{v}\norm{x-\xbar} + \|u-v\|\|x-\xbar\|\\
\le\, & \frac{\varepsilon}{2}\norm{u}\norm{x-\xbar} + \frac{2+\varepsilon}{2}\|u-v\|\|x-\xbar\|\\
\le\, & \frac{\varepsilon}{2}\norm{u}\norm{x-\xbar} + \frac{\varepsilon}{2}\|u\|\|x-\xbar\|
= \varepsilon\norm{u}\norm{x-\xbar}.
\end{align*}
%

\eqref{t:Clarke char}. The set $ A$ is Clarke regular at $\xbar$ if and only if for any
$v\in\ncone{A}(\xbar)$ and $\varepsilon>0$, there is a $\delta>0$
such that
  \[
  \ip{v}{x-\xbar}\leq \varepsilon\|v\|\|x-\xbar\|\;\; \mbox{for all}\;\; x\in  A\cap\Ball_\delta(\xbar).
  \]
This means that $ A$ is uniformly elementally subregular relative to $ A$ at $\xbar$
for all $(\xbar,v)\in \gph{\ncone{A}}$.\\
%

\eqref{t:super-regular equivalence}. By \cite[Proposition 4.4]{LewLukMal09}, $ A$ is super-regular at $\xbar$
if and only if for any $\varepsilon>0$, there is a
$\delta>0$ such that
\begin{equation}\label{Th_ii}
\ip{v}{x-a}\leq \varepsilon\norm{v}\|x-a\|\;\; \mbox{for all}\;\; x,a\in  A\cap \Ball_\delta(\xbar), v\in \ncone{A}(a).
\end{equation}
Condition \eqref{Th_ii} just means that $A$ is elementally regular at $\xbar$ for all
$(a,v)\in \gph{\ncone{A}}$ where $a\in\Ball_{\delta}(\xbar)$
with constant $\varepsilon$ and neighborhood $\Ball_{\delta}(\xbar)$.
In particular, $A$ is uniformly elementally regular at $\xbar$ for all $(\xbar,v)$ with $v\in \ncone{A}(\xbar)$.
%
%
%
That is thanks to part \eqref{t:Clarke char}, $ A$ is Clarke regular at $\xbar$ confirming
the fact established in \cite[Corollary 4.5]{LewLukMal09} that super-regularity implies
Clarke regularity.\\
%

\eqref{t:prox-regular equivalence}. By \cite[Proposition 1.2]{PolRocThi00}, $ A$ is
prox-regular at $\xbar\in A$ if and only if $ A$ is prox-regular at $\xbar$ for $0$.
This means that there exist $\bar\varepsilon,\bar\delta>0$ such that
\begin{equation}\label{Th_Prox}
\ip{v}{x-a}\leq \tfrac{\bar\varepsilon}{2}\norm{x-a}^2\;\; \mbox{for all}\;\;
x,a\in  A\cap \Ball_{\bar\delta}(\xbar), v\in \ncone{A}(a)\cap (\bar{\delta}\Ball).
\end{equation}
Let $\varepsilon>0$ be arbitrary and define $\delta = \tfrac{\varepsilon\bar{\delta}}{\bar{\varepsilon}}>0$.
Then, for any $x,a\in  A\cap \Ball_{\delta}(\xbar)$ and $0\neq v\in \ncone{A}(a)$, condition \eqref{Th_Prox} ensures that
\begin{align*}
\ip{v}{x-a} =\, & \frac{\|v\|}{\bar{\delta}}\ip{\frac{\bar{\delta}}{\|v\|}v}{x-a} \leq \frac{\|v\|}{\bar{\delta}}\frac{\bar\varepsilon}{2}\norm{x-a}^2 \quad \paren{\mbox{as } \tfrac{\bar{\delta}}{\|v\|}v\in \bar{\delta}\Ball}\\
= &\, \frac{\bar{\varepsilon}\|x-a\|}{2\bar{\delta}}\|v\|\norm{x-a}
\leq \frac{\bar{\varepsilon}\delta}{\bar{\delta}}\|v\|\norm{x-a}\quad (\mbox{as } \|x-a\|\le 2\delta)\\
=\, &\varepsilon\|v\|\norm{x-a}.
\end{align*}
This just says that $ A$ is elementally regular at $\xbar$ for all
$(a,v)\in \gph{\ncone{A}}$ where $a\in\Ball_{\delta}(\xbar)$
with constant $\varepsilon$ and neighborhood $\Ball_{\delta}(\xbar)$.
Thanks to part \eqref{t:super-regular equivalence} this, in particular, shows that prox-regularity implies
super-regularity.\\
%
%

\eqref{t: convex set regularity}. Since we are in a finite dimensional setting,
$ A$ is nonempty closed and convex if and only if the projector is everywhere
single-valued ({\em Chebyshev} \cite[Theorem 3.14]{BauCom11}, \cite{Bun34},
\cite[Theorem 12.7]{Deu01}) and
\[
 \ip{z-a}{x-a}\leq 0\;\; \mbox{for all}\;\; x,a\in  A, z\in P^{-1}_ A(a).
\]
Since $ A$ is convex, then $\ncone{A}=\pncone{A}$,
and, in particular, $\set{z-a}{z\in P^{-1}_A(a)}= \ncone{A}(a)$ for any $a\in A$,
so the above inequality holds for all $x\in A$ and $(a,v)\in\gph{\ncone{A}}$.
That is, $ A$ is elementally regular at all $x\in A$ for all $(a,v)\in\gph{\ncone{A}}$
with constant $\varepsilon=0$ and the neighborhood $\E$ for both $x$ and $v$.\\
\qed
\end{proof}

\begin{remark}\label{Rem_1.3}
The convex characterization can be localized.   If there exists a $\rho>0$
such that $ A\cap \Ball_{\rho}(\xbar)$ is convex, then for any
$\delta\in (0,\rho)$ the set $ A$ is elementally regular at $\xbar$ for
all $(a,v)\in
\gph{\ncone{A}}$
where $a\in \Ball_{\delta}(\xbar)$
with constant $\varepsilon=0$ and neighborhood $\Ball_{\delta}(\xbar)$.
This is relevant for the set in Example \ref{eg:regularity examples}\eqref{eg:cross}.
\xqed
\end{remark}

\begin{remark} Assumption \eqref{e:xbar neighborhood} seems to be a technical one.
However, it is satisfied, for example, when the collection of sets $\{ A,  B\}$ is
strongly subtransversal at $\xbar$ with constant
$\alpha \ge \tfrac{1}{1+\varepsilon^2}$, where $\alpha$ is given below in Theorem \ref{t:MAINi}\eqref{t:MAINid},
since
\[
\|(a+v)-\xbar\|=\|b-\xbar\|=d(b, A\cap   B)\le (1+\varepsilon^2)d(b,A) = (1+\varepsilon^2)\|v\|.
\]
As a consequence, 0-H\"older regularity and elemental subregularity
as specified in Proposition \ref{t:set regularity equivalences} \eqref{t:Hoelder char} are equivalent
under the additional assumption of strong subtransversality of the collection of sets.
This observation falls within our interest of investigating relationships amongst various regularity
notions of individual sets and collections of sets.
\xqed
\end{remark}

\section{Regularity of collections of sets}\label{Reg_of_Col}
The following definition captures
two of the central notions found (under various aliases and disguises) in the literature.

\begin{definition}[transversality and subtransversality]\label{D2}
\begin{enumerate}
\item\label{D2i}
$\{A,B\}$ is \emph{subtransversal} at $\xbar $ if there exist numbers $\alpha>0$ and $\delta>0$ such that
\begin{equation}\label{D2-1}
\bigl(A+(\alpha\rho)\B\bigr)\cap \bigl(B+(\alpha\rho)\B\bigr)\cap\B_{\delta}(\xbar )\subseteq
\left(A\cap B\right)+\rho\B
\end{equation}
for all $\rho\in (0,\delta)$.

If, additionally, $\xbar $ is an isolated point of $A\cap B$, then $\{A,B\}$ is called
\emph{strongly subtransversal} at $\xbar $.
The (possibly infinite) supremum of all $\al$ above is denoted $\sr$
with the convention that the supremum of the empty set is zero.
\item\label{D2ii}
$\{A,B\}$ is \emph{transversal} at $\xbar $ if there exist numbers $\alpha>0$ and $\delta>0$ such that
\begin{equation}\label{D2-2}
(A-a-x_1)\cap(B-b-x_2)\cap(\rho\B )\neq \emptyset
\end{equation}
for all $\rho \in (0,\delta)$, $a\in A\cap\B_{\delta}(\xbar )$, $b\in B\cap\B_{\delta}(\xbar )$,
and all $x_1,x_2\in\E$ with $\max\{\|x_1\|,\|x_2\|\}<\alpha\rho$.
The (possibly infinite) supremum of all $\al$ above is denoted by $\rg$
with the convention that the supremum of the empty set is zero.
\end{enumerate}
\end{definition}
\begin{remark}\label{R1}
The maximum of the norms in Definition~\ref{D2} -- explicitly present in part \eqref{D2ii} and
implicitly also in part \eqref{D2i} --
 corresponds to the maximum norm in $\R^2$ employed in these definitions and subsequent assertions.
It can be replaced everywhere by the sum norm (pretty common in this type of definition in the literature) or any
other equivalent norm.
All the assertions that follow including the quantitative characterizations remain valid
(as long as the same norm is used everywhere), although the exact values $\sr$ and $\rg$ do depend on the chosen
norm and some estimates can change.  Note that the currently used maximum norm is not Euclidean.
These details become important in the context of applications where one norm may be more appropriate
than another.
\xqed
\end{remark}

Definition~\ref{D2}\eqref{D2i} was introduced recently in \cite{KruTha15} and can be viewed
as a local analogue of the global
\emph{uniform normal property} introduced in the convex setting in \cite[Definition~3.1(4)]{BakDeuLi05}
as a generalization of the \emph{property (N)} of convex cones by Jameson \cite{Jam72}.
A particular case of the Jameson property (N) for convex cones $A$ and $B$ such that $B=-A$ and $A\cap(-A)=\{0\}$
was studied by M. Krein in the 1940s.
Definition~\ref{D2}\eqref{D2ii} first appeared in \cite{Kru05} (see also \cite{Kru06,Kru09}) in the normed linear space
setting, where the property was referred to as simply \emph{regularity} (and later as \emph{strong regularity} and \emph{uniform regularity}).
In \cite{LewLukMal09}, the property is called \emph{linearly regular intersection}.

\begin{example}\label{E2}
If $\bx\in\Int(A\cap B)$, then $\{A,B\}$ is trivially transversal (and consequently subregular)
at $\xbar $ with any $\alpha>0$.
Thus, $\rg=\sr=\infty$.
\xqed\end{example}

\begin{example}\label{E3}
If $A=B\ne\E$, then $A+(\alpha\rho)\B= B+(\alpha\rho)\B$ and $A\cap B+\rho\B=A+\rho\B$.
Hence, condition \eqref{D2-1} holds (with any $\de>0$) if and only if $\al\le1$.
Thus, $\{A,B\}$ is subtransversal at $\xbar $ and $\sr=1$.
If $A=B=\E$, then we are in the setting of Example~\ref{E2}.
\xqed\end{example}

Note that, under the conditions of Example~\ref{E3}, $\{A,B\}$ does not have to be transversal at $\xbar $.

\begin{example}\label{E4}
Consider again the two sets in Example~\ref{E1}.
Thanks to Example~\ref{E3}, $\{A,B\}$ is subtransversal at $\xbar $ and $\sr=1$.
At the same time, $A-a=B-b=\R\times\{0\}$ for any $a\in A$ and $b\in B$.
If $x_1=(0,\eps)$ and $x_2=(0,0)$, then condition \eqref{D2-2} does not hold for any $\eps>0$ and $\rho>0$.
Thus, $\{A,B\}$ is not transversal at $\xbar $ and $\rg=0$.
\xqed\end{example}

The next two results are a catalog of the main characterizations of subtransversality and transversality, respectively.

\begin{theorem}[characterizations of subtransversality]\label{t:MAINi}
The following statements are equivalent to $\{A,B\}$ being subtransversal at $\xbar $.\\

\begin{enumerate}[\rm (i)]
\item\label{t:MAINib} There exist numbers
$\delta>0$ and $\alpha>0$ such that
\begin{equation*}
(A-x)\cap(B-x)\cap(\rho\B )\neq \emptyset
\end{equation*}
for all $x\in\B_{\delta}(\xbar )$ such that $x=a+x_1=b+x_2$ for some $a\in A$, $b\in B$ and $x_1,x_2\in\E$
with $\max\{\|x_1\|,\|x_2\|\}<\alpha\rho$.
Moreover, $\sr$ is the exact upper bound of all numbers $\alpha$ such that the condition above is satisfied.
\end{enumerate}
Metric characterizations.
\begin{enumerate}[\rm (i)]
\addtocounter{enumi}{1}
\item\label{t:MAINic} There exist numbers
$\delta>0$ and $\alpha>0$
such that
\begin{gather}\label{e:T1-1}
\alpha d\left(x,A\cap B\right)\le \max\left\{d(x,A),d(x,B)\right\}
\;\;\mbox{for all}\;\; x\in \B_{\delta}(\xbar ).
\end{gather}
Moreover, $\sr$ is the exact upper bound of all numbers $\alpha$ such that (\ref{e:T1-1}) is satisfied.
\item\label{t:MAINid}  There exist numbers
$\delta>0$ and $\alpha>0$
such that
\begin{equation}\label{e:P3-1}
\alpha d(x,A\cap B)\le d(x,B)\;\;\mbox{for all}\;\; x\in A\cap\B_{\delta}(\xbar ).
\end{equation}
Moreover,
\begin{equation}\label{P3-2}
\frac{\srr}{\srr+2}\le\sr\le\srr,
\end{equation}
where $\srr$ is the exact upper bound of all numbers $\alpha$ such that condition (\ref{e:P3-1}) is satisfied.
\end{enumerate}
\end{theorem}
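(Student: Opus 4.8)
The plan is to unfold the Minkowski-sum definition into statement \eqref{t:MAINib}, then pass between the geometric and the metric formulations while tracking the constant $\al$ so that the exact-upper-bound assertions follow automatically. First I would note that \eqref{t:MAINib} is merely a set-theoretic rewriting of Definition~\ref{D2}\eqref{D2i}: membership $x\in A+(\al\rho)\B$ means exactly $x=a+x_1$ with $a\in A$ and $\|x_1\|<\al\rho$ (and similarly for $B$), while the conclusion $x\in(A\cap B)+\rho\B$ is equivalent to $(A-x)\cap(B-x)\cap(\rho\B)\neq\emptyset$, since a point $z$ in this intersection is precisely a point $x+z\in A\cap B$ with $\|z\|<\rho$. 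Nothing but notation changes, so the defining property and \eqref{t:MAINib} hold for the same pairs $(\al,\de)$, and $\sr$ is their common exact upper bound.

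Next I would establish \eqref{t:MAINib} $\Leftrightarrow$ \eqref{t:MAINic} with the constant $\al$ preserved in both directions. For the forward implication, if \eqref{e:T1-1} holds and $x=a+x_1=b+x_2$ with $\max\{\|x_1\|,\|x_2\|\}<\al\rho$, then $d(x,A)\le\|x_1\|$ and $d(x,B)\le\|x_2\|$, so $\max\{d(x,A),d(x,B)\}<\al\rho$ and \eqref{e:T1-1} gives $d(x,A\cap B)<\rho$, which is \eqref{t:MAINib}. For the converse, given $x\in\B_{\de'}(\xbar)$ I would choose projections $a\in P_A(x)$ and $b\in P_B(x)$ (nonempty because $A,B$ are closed in a Euclidean space), so that $x=a+(x-a)=b+(x-b)$ with $\|x-a\|=d(x,A)$ and $\|x-b\|=d(x,B)$; writing $M=\max\{d(x,A),d(x,B)\}$ and invoking \eqref{t:MAINib} for every $\rho>M/\al$ yields $d(x,A\cap B)<\rho$, and letting $\rho\downarrow M/\al$ gives $\al\,d(x,A\cap B)\le M$, i.e. \eqref{e:T1-1}. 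Since $\al$ is untouched, the exact upper bound in \eqref{t:MAINic} is again $\sr$.

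Finally I would treat the one-sided characterization \eqref{t:MAINid} and the estimate \eqref{P3-2}. The bound $\sr\le\srr$ is immediate: restricting \eqref{e:T1-1} to $x\in A$ annihilates $d(x,A)$ and turns it into \eqref{e:P3-1} with the same constant. For the reverse bound $\tfrac{\srr}{\srr+2}\le\sr$, assume \eqref{e:P3-1} holds with some $\al'$ and radius $\de$, take any $x\in\B_{\de/2}(\xbar)$, and project $x$ onto $A$ at $a\in P_A(x)$ (which lies in $\B_{\de}(\xbar)$ since $\|a-\xbar\|\le d(x,A)+\|x-\xbar\|\le 2\|x-\xbar\|$) and onto $B$ at $b\in P_B(x)$, then chain
\[
d(x,A\cap B)\le d(x,A)+d(a,A\cap B)\le d(x,A)+\tfrac{1}{\al'}d(a,B)\le d(x,A)+\tfrac{1}{\al'}\bigl(d(x,A)+d(x,B)\bigr),
\]
the middle step using \eqref{e:P3-1} at $a$ and the last using $d(a,B)\le\|a-b\|\le\|a-x\|+\|x-b\|$. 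With $M=\max\{d(x,A),d(x,B)\}$ one has $d(x,A)\le M$ and $d(x,A)+d(x,B)\le 2M$, so $d(x,A\cap B)\le\tfrac{\al'+2}{\al'}M$; this is \eqref{e:T1-1} with constant $\tfrac{\al'}{\al'+2}$, whence $\sr\ge\tfrac{\al'}{\al'+2}$, and letting $\al'\uparrow\srr$ (using monotonicity of $t\mapsto t/(t+2)$) gives $\sr\ge\tfrac{\srr}{\srr+2}$.

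I expect the main obstacle to be precisely this last projection-and-triangle-inequality argument. Two points require care: the radii must be chosen so that the projected point $a$ stays inside the ball on which \eqref{e:P3-1} is assumed valid, and only non-strict inequalities may survive the passage $\rho\downarrow M/\al$, so that the suprema defining $\sr$, $\srr$ and the optimal constants in \eqref{t:MAINic} and \eqref{t:MAINid} are matched exactly rather than up to an $\eps$.
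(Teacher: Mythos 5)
Your proof is correct, and on the one part the paper actually proves in detail --- characterization \eqref{t:MAINid} and the estimate \eqref{P3-2} --- it runs through the same chain of triangle inequalities, but with two genuine deviations worth recording. First, where the paper simply cites \cite[Theorem~3.1]{KruTha15} for the metric characterization \eqref{t:MAINic}, you supply a short self-contained proof: the forward direction by bounding $d(x,A)\le\|x_1\|$ and $d(x,B)\le\|x_2\|$, the converse by projecting $x$ onto $A$ and $B$ and letting $\rho\downarrow M/\al$; since $\al$ is preserved exactly in both directions, this also recovers the exact-bound assertion that the paper inherits from the citation. Second, in passing from \eqref{e:P3-1} to \eqref{e:T1-1} you use an exact projection $a\in P_A(x)$, legitimate here because $A$ is closed in a Euclidean space, whereas the paper selects an approximate nearest point $x\in A$ with $\frac{\be}{\al}\|x'-x\|\le d(x',A)$ and then juggles the auxiliary parameters $\al'<\al/(\al+2)$ and $\be\in(\al/2,\al)$ before passing to the limit; your version reaches the same constant $\al/(\al+2)$ directly (with radius $\de/2$ in place of the paper's $\de/3$), while the paper's $\be$-device buys an argument that survives in settings where nearest points need not exist (the normed-space setting of \cite{KruTha15}), which is irrelevant in the Euclidean framework adopted here. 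One bookkeeping point you flag but do not fully execute: in the converse of \eqref{t:MAINib}$\Rightarrow$\eqref{t:MAINic}, the radius $\rho$ in Definition~\ref{D2}\eqref{D2i} is confined to $(0,\de)$, so the ball must be shrunk, e.g.\ by taking $x\in\B_{\de'}(\xbar)$ with $\de'\le\min\{\de,\al\de\}$, which guarantees $M/\al<\de$ because $M\le\|x-\xbar\|<\de'$ (using $\xbar\in A\cap B$); with that choice the limit $\rho\downarrow M/\al$ goes through, no constant is lost, and the suprema in \eqref{t:MAINib}, \eqref{t:MAINic} and the definition of $\sr$ coincide as you claim.
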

\proof
{\em Characterization \eqref{t:MAINib}.}  This is easily checked. \\

{\em Characterization \eqref{t:MAINic}.}
This follows from \cite[Theorem~3.1]{KruTha15}.\\

{\em Characterization \eqref{t:MAINid}.}
The inequality \eqref{e:T1-1} implies \eqref{e:P3-1},
hence, thanks to characterization \eqref{t:MAINic}, subregularity of $\{A,B\}$ implies property \eqref{e:P3-1} with the
same numbers $\alpha>0$ and $\delta>0$, and the second inequality in \eqref{P3-2} holds true.

Conversely, let condition \eqref{e:P3-1} be satisfied with some numbers $\alpha>0$ and $\delta>0$.
Choose a positive number $\al'<\al/(\al+2)$ and a number $\be\in(\al/2,\al)$ such that
$\al'<1/(\al/\be+\al\iv+\be\iv)$ which is always possible if $\be$ is taken sufficiently close to $\al$.
For any $x'\in\B_{\frac{\delta}{3}}(\bar{x})$, we choose an $x\in A$ such that $\frac{\be}{\al}\norm{x'-x}\le d(x',A)$.
Note that $x\in\B_{\delta}(\bar{x})$ since
\begin{equation*}
\norm{x-\bar{x}}\le \norm{x-x'}+\norm{x'-\bar{x}}\le \frac{\al}{\be}d(x',A)+\norm{x'-\bar{x}}\le
\left(\frac{\al}{\be}+1\right)\norm{x'-\bar{x}}\le 3\norm{x'-\bar{x}}\le \delta.
\end{equation*}
Using \eqref{e:P3-1}, we obtain
\begin{align*}
d(x',A\cap B) &\le\norm{x'-x}+d(x,A\cap B)\\
&\le \norm{x'-x} +\al\iv d(x,B)\\
&\le\norm{x'-x} +\al\iv\left(\norm{x-x'}+d(x',B)\right)\\
&=\left(1+\al\iv\right)\norm{x'-x} +\al\iv d(x',B)\\
&\le\left(1+\al\iv\right)\frac{\al}{\be}d(x',A) +\al\iv d(x',B)\\
&\le \left(\frac{\al}{\be}+\be\iv+\al\iv\right) \max\{d(x',A),d(x',B)\}\\
&\le(\al')\iv\max\{d(x',A),d(x',B)\},
\end{align*}
and consequently, after passing to the limit as $\al'\to\al/(\al+2)$,
\begin{align*}
\frac{\al}{\al+2}d(x',A\cap B) \le\max\{d(x',A),d(x',B)\}.
\end{align*}
Hence, thanks to characterization \eqref{t:MAINic}, property \eqref{e:P3-1} implies subtransversality of $\{A,B\}$ with
numbers $\al/(\al+2)$ and $\delta/3$.
Because $\al$ can be chosen arbitrarily close to $\srr$, the first inequality in \eqref{P3-2} holds true.\\
This completes the proof.
\qed
\bigskip

\begin{remark}[Historical remarks and further relations]
Thanks to characterization \eqref{t:MAINic} of Theorem~\ref{t:MAINi}, subtransversality of a collection of sets can be
recognized as a well known regularity property that has been around for more than 20 years under
the names of (local) \emph{linear regularity}, \emph{metric regularity}, \emph{linear coherence},
\emph{metric inequality}, and \emph{subtransversality}; cf. \cite{BakDeuLi05,BauBor93,
BauBor96,Iof89,Iof00,Iof15,KlaLi99,HesLuk13,
LiNgPon07,NgaThe01,Pen13,ZheNg08,ZheWeiYao10, DruIofLew15,Pan15}.
It has been used as the key assumption when establishing linear convergence of sequences generated by
cyclic projection algorithms and a qualification condition for subdifferential and normal cone calculus formulae.
This property is implied by the \emph{bounded linearly regularity} \cite{BauBor96}.
If $A$ and $B$ are closed convex sets and the collection $\{A,B\}$ is subtransversal at any point in $A\cap B$,
then it is boundedly linear regular; cf. \cite[Remark~6.1(d)]{BakDeuLi05}.
Characterization \eqref{t:MAINid} of Theorem \ref{t:MAINi}
can be considered as a nonconvex extension of \cite[Theorem~3.1]{NgYan04}.

One can also observe that condition \eqref{e:T1-1} is equivalent to the function $x\mapsto\max\{d(x,A),d(x,B)\}$
having a local \emph{error bound} \cite{Aze03,FabHenKruOut10,Kru15}/\emph{weak sharp minimum}
\cite{BurDen02,BurDen05,BurFer93} at $\bx$ with constant $\al$.
One can think of condition \eqref{e:T1-2} as a kind of uniform local error bound/relaxed weak sharp minimum
property; cf. \cite{Kru06}.

The geometrical property \eqref{e:T1-1} of a collection of sets $\{A,B\}$ can also be viewed as a
certain property of a collection of distance functions $x\mapsto d(x,A)$ and $x\mapsto d(x,B)$.
It is sufficient to notice that $A\cap B=\left\{x\in \Euclid \mid  \max\{d(x,A),d(x,B)\}\le 0\right\}$.
One can study regularity properties of collections of arbitrary (not necessarily distance) functions.
Such an attempt has been made recently in the convex setting by C.H.J.~Pang \cite{Pan15}.
Given a collection of convex functions $\{f_1,f_2\}$, the following analogue of condition \eqref{e:T1-1} is
considered in \cite{Pan15}:
\begin{gather*}
\alpha d\left(x,C\right)\le \max\left\{d(x,H_1(x)),d(x,H_2(x))\right\}
\;\;\mbox{for all}\;\; x\in \E,
\end{gather*}
where $C:=\{u\in\E\mid\max\{f_1(u),f_2(u)\}\le0\}$, $H_i(x):=\{u\in\E\mid f_i(x)+\ang{v_i,u-x}\le0\}$ for
some chosen $v_i\in\sd f_i(x)$ if $f_i(x)>0$ and $H_i(x):=\E$ otherwise, $i=1,2$.
It is easy to check that, in the case of distance functions, this property reduces to \eqref{e:T1-1}.
\xqed
\end{remark}

\begin{theorem}[characterizations of transversality]\label{t:MAINii}
The following statements are equivalent to $\{A,B\}$ being
transversal at $\xbar$.\\

\noindent Metric characterizations.
\begin{enumerate}[\rm (i)]
\item\label{t:MAINiib} There exist numbers $\delta>0$ and $\alpha>0$
such that
\begin{gather}\label{e:T1-2}
\alpha d\left(x,(A-x_1)\cap (B-x_2)\right)\le \max\left\{d(x,A-x_1),d(x,B-x_2)\right\},\;\;
\forall x\in \B_{\delta}(\xbar ),\;x_1,x_2\in \delta\B.
\end{gather}
Moreover, $\rg$ is the exact upper bound of all numbers $\alpha$ such that (\ref{e:T1-2}) is satisfied.
\item\label{t:MAINiic} There exist numbers $\delta>0$ and $\alpha>0$
such that
\begin{equation}\label{e:P3-3}
\alpha d(x,(A-x_1)\cap(B-x_2))\le d(x,B-x_2),\;\;
\forall x\in (A-x_1)\cap\B_{\delta}(\xbar ),\;x_1,x_2\in\de\B.
\end{equation}
Moreover,
\begin{equation}\label{P3-4}
\frac{\rgg}{\rgg+2}\le\rg\le\rgg,
\end{equation}
where $\rgg$ is the exact upper bound of all numbers $\alpha$ such that condition (\ref{e:P3-3}) is satisfied.
\end{enumerate}
Dual characterizations.
\begin{enumerate}[\rm (i)]
\addtocounter{enumi}{2}
\item\label{t:MAINiie}
There exist numbers $\alpha>0$
and $\delta>0$ such that
$\|v_1+v_2\|>\alpha$
for all $a\in A\cap\B_\delta(\bar{x})$, $b\in B\cap\B_\delta(\bar{x})$,
$v_1\in\fncone{A}(a)$ and $v_2\in\fncone{B}(b)$ with $\|v_1\|+\|v_2\|=1$.
Moreover, $\rg$ is the exact upper bound of all numbers $\alpha$ above.
\item\label{t:MAINiie-n} There exists a number $\alpha>0$ such that
$\|v_1+v_2\|>\alpha$
for all
$v_1\in\ncone{A}(a)$ and $v_2\in\ncone{B}(b)$ with $\|v_1\|+\|v_2\|=1$.
Moreover, $\rg$ is the exact upper bound of all such numbers $\alpha$.\\

\item\label{t:MAINiid}
$\ncone{A}(\xbar ) \cap (-\ncone{B}(\xbar )) = \{0\}$.\\

\item\label{t:MAINiig}
There is a number $\alpha>0$ such that
$d^2\left(v,\ncone{A}(\bar{x})\right)+ d^2\left(v,-\ncone{B}(\bar{x})\right)>\alpha^2$
for all $v\in\Sp$.
Moreover, the exact upper bound of all such numbers $\alpha$, denoted $\rgdd$,
satisfies $\rgdd=\sqrt{2}\,\rg$.

\item\label{t:MAINiij}
There is a number $\alpha<1$ such that
$-\ang{v_1,v_2}<\alpha$
for all $v_1\in\ncone{A}(\bx)$ and $v_2\in\ncone{B}(\bx)$ with $\|v_1\|=\|v_2\|=1$.
Moreover, the exact lower bound of all such numbers $\alpha$, denoted $\rga$,
satisfies $\rga+2(\rg)^2=1$.
\end{enumerate}
\end{theorem}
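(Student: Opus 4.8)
The plan is to separate the metric characterizations \eqref{t:MAINiib}--\eqref{t:MAINiic} from the dual ones \eqref{t:MAINiie}--\eqref{t:MAINiij}, proving each equivalent to transversality while tracking the exact modulus $\rg$. For the metric group, the first observation is that, with the maximum norm on $\E^2$, the \SVM\ $F$ of \eqref{FAB} satisfies $F^{-1}(x_1,x_2)=(A-x_1)\cap(B-x_2)$ and $d\bigl((x_1,x_2),F(x)\bigr)=\max\{d(x,A-x_1),d(x,B-x_2)\}$, so that inequality \eqref{e:T1-2} of \eqref{t:MAINiib} is exactly metric regularity of $F$ at $\xbar$ for $0$. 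A short unwinding of Definition~\ref{D2}\eqref{D2ii} shows that $z\in(A-a-x_1)\cap(B-b-x_2)\cap\rho\B$ is the same as membership of a perturbed image point in $F(x+\rho\B)$; hence transversality is precisely linear openness of $F$. Invoking the classical equivalence ``metric regularity $\Leftrightarrow$ linear openness'' (with coincidence of moduli) gives transversality $\Leftrightarrow$ \eqref{t:MAINiib} and identifies $\rg$ as the exact bound in \eqref{e:T1-2}. The equivalence \eqref{t:MAINiib} $\Leftrightarrow$ \eqref{t:MAINiic} and the estimate \eqref{P3-4} are then obtained by repeating, uniformly in $x_1,x_2\in\delta\B$, the triangle-inequality argument already used for Theorem~\ref{t:MAINi}\eqref{t:MAINid}, which trades the symmetric inequality \eqref{e:T1-2} for the one-sided \eqref{e:P3-3} at the cost of the factor $\alpha/(\alpha+2)$.

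The dual group hinges on the Fr\'echet characterization \eqref{t:MAINiie}, which I expect to be the main obstacle. The implication transversality $\Rightarrow$ \eqref{t:MAINiie} is the routine direction: for $v_1\in\fncone{A}(a)$ and $v_2\in\fncone{B}(b)$ with $\|v_1\|+\|v_2\|=1$, the defining inequalities of the Fr\'echet normal cones supply points of $A$ and $B$ near $a,b$ which, fed into the openness property of Definition~\ref{D2}\eqref{D2ii} with displacements taken along $v_1,v_2$, force $\|v_1+v_2\|\ge\rg$. The converse \eqref{t:MAINiie} $\Rightarrow$ transversality is the hard analytic step, and I would argue it by a slope/error-bound contradiction: assuming \eqref{e:T1-2} fails, apply Ekeland's variational principle to the Lipschitz function $y\mapsto\max\{d(y,A-x_1),d(y,B-x_2)\}$ to obtain a near-minimizer $\hat y$ at which this function is still strictly positive, then use a fuzzy sum rule to produce Fr\'echet normals $v_1\in\fncone{A}(a)$, $v_2\in\fncone{B}(b)$ at points near $\hat y$ whose normalized sum has norm below $\alpha$, contradicting \eqref{t:MAINiie}. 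Calibrating the Ekeland radius against $\alpha$ is what recovers the exact modulus.

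With \eqref{t:MAINiie} established, the remaining dual equivalences are limiting and compactness arguments. For \eqref{t:MAINiie} $\Leftrightarrow$ \eqref{t:MAINiie-n} I would use \eqref{NC4}: a limiting normal at $\xbar$ is a limit of Fr\'echet normals at points tending to $\xbar$, so passing to the limit converts the uniform neighborhood inequality of \eqref{t:MAINiie} into the single-point inequality of \eqref{t:MAINiie-n}, while a diagonal extraction along shrinking neighborhoods gives the converse and preserves the exact bound. The equivalence \eqref{t:MAINiie-n} $\Leftrightarrow$ \eqref{t:MAINiid} is immediate from closedness of the limiting normal cones and compactness of $\Sp$: the infimum of $\|v_1+v_2\|$ over $v_1\in\ncone{A}(\xbar)$, $v_2\in\ncone{B}(\xbar)$ with $\|v_1\|+\|v_2\|=1$ is attained, and it vanishes exactly when a nonzero vector lies in $\ncone{A}(\xbar)\cap(-\ncone{B}(\xbar))$, i.e. when \eqref{e:TransCond} fails.

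Finally, the angle characterizations \eqref{t:MAINiig} and \eqref{t:MAINiij} and their exact constants are pure cone geometry derived from \eqref{t:MAINiie-n}. Writing $v_1=sw_1$, $v_2=tw_2$ with $w_1\in\ncone{A}(\xbar)\cap\Sp$, $w_2\in\ncone{B}(\xbar)\cap\Sp$, $s,t\ge0$, $s+t=1$, one has $\|v_1+v_2\|^2=s^2+t^2+2st\ang{w_1,w_2}$; minimizing over $s,t$ (optimum $s=t=\tfrac12$) and over $w_1,w_2$ (optimum realizing $\rga=\sup(-\ang{w_1,w_2})$) yields $(\rg)^2=\tfrac12(1-\rga)$, which is exactly $\rga+2(\rg)^2=1$. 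For \eqref{t:MAINiig} I would use the cone-distance identity $d^2(v,\ncone{A}(\xbar))=1-\bigl(\max\{0,\sup_{w\in\ncone{A}(\xbar)\cap\Sp}\ang{v,w}\}\bigr)^2$ valid for unit $v$, together with the $2\times2$ eigenvalue computation $\lambda_{\max}(w_1w_1^{\top}+w_2w_2^{\top})=1+|\ang{w_1,w_2}|$; minimizing $d^2(v,\ncone{A}(\xbar))+d^2(v,-\ncone{B}(\xbar))$ over the sphere then gives $(\rgdd)^2=1-\rga=2(\rg)^2$, i.e. $\rgdd=\sqrt{2}\,\rg$. The principal difficulty throughout is the converse of \eqref{t:MAINiie}: obtaining the dual inequality from the metric one with the sharp constant, which is where the variational machinery (Ekeland plus fuzzy calculus) is unavoidable.
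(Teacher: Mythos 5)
Your proposal is correct in substance, but it reconstructs from first principles what the paper largely assembles by citation, and it reorganizes the dual part; the comparison is worth recording. The paper's own proof obtains \eqref{t:MAINiib} by citing \cite[Theorem~3.1]{KruTha15}, \eqref{t:MAINiie} by citing \cite{Kru05}, and \eqref{t:MAINiij} by citing \cite[Theorem~2]{KruTha13}; it proves \eqref{t:MAINiic} exactly as you do (running the triangle-inequality argument of Theorem~\ref{t:MAINi}\eqref{t:MAINid} uniformly over the translates $A-x_1$, $B-x_2$), deduces \eqref{t:MAINiie-n} from \eqref{t:MAINiie} by passing to limits via \eqref{NC4} and \eqref{t:MAINiid} from closedness of the limiting cones (again as you do), and proves \eqref{t:MAINiig} by a direct projection computation. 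Where you differ: you derive \eqref{t:MAINiib} by identifying transversality with linear openness of the mapping $F$ of \eqref{FAB} and invoking the classical openness/metric-regularity equivalence with equal moduli --- essentially the content the paper defers to Theorem~\ref{t:svm} and the references \cite{Iof00,Kru06} --- and your quantifier matching is right, since $u\in(A-a-x_1)\cap(B-b-x_2)\cap\rho\B$ is precisely $(a-x+x_1,b-x+x_2)\in F(x+u)$ with $\|u\|<\rho$; you supply the Ekeland-plus-fuzzy-sum-rule proof of \eqref{t:MAINiie}, which is indeed the method behind the cited \cite{Kru05}, and your emphasis on calibrating the Ekeland radius is apt because that is where exactness of $\rg$ is won or lost (note that in the easy direction the sharp constant requires displacements $x_i=-t\,v_i/\|v_i\|$, the sum-norm normalization $\|v_1\|+\|v_2\|=1$ being dual to the max norm in Definition~\ref{D2}\eqref{D2ii}); and you obtain \eqref{t:MAINiij} and \eqref{t:MAINiig} by explicit optimization over the cones (the $s=t=\tfrac12$ computation yielding $\rga+2(\rg)^2=1$, then the eigenvalue computation yielding $\rgdd=\sqrt{2}\,\rg$), where the paper instead cites \cite{KruTha13} for the former and argues the latter via projections of an extremal $v$. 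Your route buys self-containedness and visible modulus bookkeeping; the paper's buys brevity by resting on published results. One shared caveat: your identity $\lambda_{\max}\bigl(w_1w_1^{\top}+w_2w_2^{\top}\bigr)=1+\left|\ang{w_1}{w_2}\right|$ ignores the positive-part constraint in the cone-distance formula, just as the paper's step asserting that the extremal projections satisfy $\|v_1\|=\|v_2\|$ ``by symmetry'' presumes a symmetric extremal configuration; both computations are exact in the generic regime $\rga\ge 0$ but would require a positive-part correction in degenerate configurations (for instance when $\ncone{A}(\bx)$ and $\ncone{B}(\bx)$ are a common single ray, where $\rga<0$), so on this point your treatment is faithful to, and no weaker than, the paper's own argument.
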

\proof
{\em Characterization \eqref{t:MAINiib}.}
  This follows from \cite[Theorem~3.1]{KruTha15}.\\

{\em Characterization \eqref{t:MAINiic}.}
Condition \eqref{e:T1-2} implies \eqref{e:P3-3}.
Hence, 
transversality  of $\{A,B\}$
implies property \eqref{e:P3-3} with the same numbers $\alpha>0$ and $\delta>0$,
and the second inequality in \eqref{P3-4} holds true.

Conversely, let condition \eqref{e:P3-3} be satisfied with some numbers $\alpha>0$ and $\delta>0$.
Fix any $x_1,x_2\in\de'\B$.
Then condition \eqref{e:P3-1} is satisfied with the same numbers $\alpha$ and $\delta$ and the
sets $A-x_1$ and $B-x_2$ in place of $A$ and $B$, respectively.
As shown in the proof of Theorem \ref{t:MAINi} \eqref{t:MAINid}, it holds
\begin{gather*}
\frac{\al}{\al+2}d\left(x',(A-x_1)\cap (B-x_2)\right)\le \max\left\{d(x',A-x_1),d(x',B-x_2)\right\}
\;\;\mbox{for all}\;\; x'\in \B_{\frac{\delta}{3}}(\bar{x}).
\end{gather*}
Since the points $x_1,x_2\in\de\B$ are arbitrary, this is exactly the transversality  property of $\{A,B\}$
with numbers $\al/(\al+2)$ and $\delta/3$.
The first inequality in \eqref{P3-4} follows.\\

{\em Characterization \eqref{t:MAINiie}.}  This was established in \cite{Kru05} (in the Asplund space setting).\\

{\em Characterization \eqref{t:MAINiie-n}.}
Thanks to the representation \eqref{NC4}, taking limits in characterization \eqref{t:MAINiie} yields
the result. \\

{\em Characterization \eqref{t:MAINiid}.}
The equivalence of conditions \eqref{t:MAINiie-n} and \eqref{t:MAINiid}
is straightforward since $\ncone{A}(\bar{x})$ and $\ncone{B}(\bar{x})$ are closed cones.\\

{\em Characterization \eqref{t:MAINiij}.}
This has been established in \cite[Theorem~2]{KruTha13}).\\

{\em Characterization \eqref{t:MAINiig}.}
Let $v\in\Sp$ be such that
$d^2\left(v,\ncone{A}(\bar{x})\right)+ d^2\left(v,-\ncone{B}(\bar{x})\right)=(\rgdd)^2$, and
$v_1$ and $v_2$ be the projections of $v$ on $\ncone{A}(\bx)$ and $-\ncone{B}(\bx)$,
respectively.
It is immediate from the symmetry that $\|v_1\|=\|v_2\|$.
Thus,
$$
\ang{v,v_1+v_2}=\|v_1+v_2\|,
\quad
\ang{v,v_1}=\ang{v,v_2}=\|v_1\|^2=\|v_2\|^2,
$$
and consequently,
\begin{gather*}
\|v_1+v_2\|=\|v_1\|^2+\|v_2\|^2=2\|v_1\|^2,
\\
\|v_1-v_2\|^2=2\|v_1\|^2-2\ang{v_1,v_2}= 4\|v_1\|^2-\|v_1+v_2\|^2= 4\|v_1\|^2(1-\|v_1\|^2).
\end{gather*}
Let $v_1'$ and $v_2'$ be unit vectors such that $v_1=\|v_1\|v_1'$ and $v_2=-\|v_1\|v_2'$.
Then $v_1'\in\ncone{A}(\bx)\cap\Sp$, $v_2'\in\ncone{B}(\bx)\cap\Sp$, and
$
\|v_1'+v_2'\|^2=4(1-\|v_1\|^2).
$
At the same time,
\begin{align*}
(\rgdd)^2&=\|v-v_1\|^2+\|v-v_2\|^2= 2-2\ang{v,v_1+v_2}+2\|v_1\|^2
\\&
=2(1-\|v_1\|^2)= \frac{1}{2}\|v_1'+v_2'\|^2\ge2(\rg)^2.
\end{align*}
Conversely, let $v_1\in\ncone{A}(\bx)$ and $v_2\in\ncone{B}(\bx)$ be such that
$\|v_1\|=\|v_2\|=1$ and $\|v_1+v_2\|=2\rg$.
Choose a unit vector $v$ such that $\ang{v,v_1+v_2}=\|v_1+v_2\|$ and let
$v_1'$ and $v_2'$ be the projections of $v$ on $v_1$ and $v_2$, respectively.
We are in a situation as above.
\begin{align*}
4(\rg)^2&=\|v_1'+v_2'\|^2= 2(\|v-v_1'\|^2+\|v-v_2'\|^2)\ge2(\rgdd)^2.
\end{align*}
This completes the proof of \eqref{t:MAINiig}.
%
\qed

\begin{remark}[characterization \eqref{t:MAINiib} and strong regularities]
Characterization \eqref{t:MAINiib} of Theorem~\ref{t:MAINii} reveals that the transversality of a collection
of sets corresponds to
subtransversality/linear regularity of all their small translations holding uniformly (cf. \cite[Page~1638]{DruIofLew15}).
Property \eqref{e:T1-2} was referred to in \cite{Kru05,Kru06,Kru09} as \emph{strong metric inequality}.
If $A$ and $B$ are closed convex sets and $\Int A\ne\emptyset$, then the transversality of the
collection $\{A,B\}$ is equivalent to the conventional qualification condition: $\Int A\cap B\ne\emptyset$; cf.
\cite[Proposition~14]{Kru05}.
\xqed
\end{remark}

\begin{remark}[characterization \eqref{t:MAINiie} and Jameson's property]
Characterization \eqref{t:MAINiie} in Theorem~\ref{t:MAINii}  can be formulated equivalently as follows:
\smallskip

\noindent\it
there exist numbers
$\alpha>0$ and $\delta>0$ such that
$\|v_1+v_2\|\ge\alpha(\|v_1\|+\|v_2\|)$
for all $a\in A\cap\B_\delta(\bar{x})$, $b\in B\cap\B_\delta(\bar{x})$,
$v_1\in\fncone{A}(a)$ and $v_2\in\fncone{B}(b)$.
\smallskip

\noindent\rm
This characterization can be interpreted as a strengthened version of the \emph{Jameson’s property (G)}
\cite{Jam72} (cf. \cite{NgYan04,BakDeuLi05,BauBorLi99}).
As with all dual characterizations, it basically requires that among all admissible pairs of nonzero normals
to the sets there is no pair of normals which are
oppositely directed.  Thanks to the representation \eqref{NC3}, one can replace the Fr\'echet normals
by the proximal ones.
\xqed
\end{remark}

\begin{remark}[characterization \eqref{t:MAINiid} and related notions]
Note that, unlike the other characterizations, \eqref{t:MAINiid} provides only a
qualitative criterion of transversality.
It has the interpretation that the cones
$\ncone{A}(\xbar )$ and $\ncone{B}(\xbar )$
are \emph{strongly additively regular} \cite{BurDen05}, and has been described as a
``concise, fundamental, and widely studied geometric property'' \cite{DruIofLew15} extensively used in
nonconvex optimization and calculus.
It is known under various names, e.g., \emph{transversality} \cite{ClaLedSteWol98},
\emph{normal qualification condition} \cite{Mor06,Pen13}, \emph{linearly regular intersection}
\cite{LewLukMal09}, \emph{alliedness property} \cite{Pen13}, and \emph{transversal intersection}
\cite{Iof15,DruIofLew15}.

An immediate consequence of characterization \eqref{t:MAINiid} is the following crucial inclusion
expressed in terms of the limiting normal cones
(cf. \cite[P.~99]{ClaLedSteWol98}, \cite[Theorem~6.42]{RocWet98}, \cite[P.~142]{Mor06}):
\begin{align}\label{cul}
\ncone{A\cap B}(\xbar )\subseteq \ncone{A}(\xbar)+\ncone{B}(\xbar),
\end{align}
which can be considered as an extension of the \emph{strong conical hull intersection property
(strong CHIP)} \cite{DeuLiWar97 } (cf. \cite[Definition~5.1(2)]{BakDeuLi05}) to nonconvex sets.
Indeed, since the opposite inclusion in terms of Fr\'echet normal cones holds true trivially:
\begin{align}\label{cul1}
\fncone{A\cap B}(\xbar )\supset \fncone{A}(\xbar)+\fncone{B}(\xbar),
\end{align}
and both cones reduce in the convex case to the normal cone \eqref{CNC}, inclusion \eqref{cul} is
equivalent in the convex setting to the strong CHIP:
\begin{align}\label{cul2}
\nsub{A\cap B}(\xbar )= \nsub{A}(\xbar)+\nsub{B}(\xbar).
\end{align}
The last equality has proved to be a fundamental regularity property in several areas of convex optimization;
see the discussion of the role of this property (and many other regularity properties of collections of convex
sets) in \cite{BakDeuLi05,BauBorLi99}.
Inclusion \eqref{cul} plays a similar role in nonconvex optimization and calculus.
Thus, thanks to Theorem \ref{t:MAINii}\eqref{t:MAINiid}, transversality implies
the extended strong CHIP \eqref{cul}.

In fact, it is now well recognized that inclusion \eqref{cul} is ensured by the weaker subtransversality property.
The next proposition is a consequence of \cite[Proposition~3.2]{IofOut08} (or \cite[Theorem~6.41]{Pen13})
and the characterization of subtransversality in
Theorem~\ref{t:MAINi}\eqref{t:MAINic}.
\begin{proposition}\label{P7}
If $\{A,B\}$ is subtransversal at $\xbar $, then inclusion \eqref{cul} holds true.
\end{proposition}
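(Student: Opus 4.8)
The plan is to translate the geometric subtransversality hypothesis into a metric error bound and then run the standard exact-penalization argument for limiting normal cones. First I would invoke the metric characterization Theorem~\ref{t:MAINi}\eqref{t:MAINic}: subtransversality of $\{A,B\}$ at $\xbar$ is equivalent to the existence of $\alpha>0$ and $\delta>0$ with
\[
\alpha\, d(x,A\cap B)\le \max\{d(x,A),d(x,B)\}\qquad\mbox{for all } x\in\B_{\delta}(\xbar).
\]
Writing $S:=A\cap B$ and $f:=\max\{d(\cdot,A),d(\cdot,B)\}$, this says exactly that $S=f^{-1}(0)$ and that $f$ admits a local error bound at $\xbar$ with modulus $\alpha$. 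This error bound is precisely the hypothesis under which the cited calculus rules \cite[Proposition~3.2]{IofOut08}, \cite[Theorem~6.41]{Pen13} yield the limiting normal-cone sum rule, so one may quote them directly; what follows is the self-contained argument they encode.

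Next I would take $v\in\ncone{S}(\xbar)$ and exploit the finite-dimensional representation \eqref{NC4} to choose $x_k\to_S\xbar$ and Fr\'echet normals $v_k\in\fncone{S}(x_k)$ with $v_k\to v$. Each Fr\'echet normal makes $x_k$ a local minimizer over $S$ of the $C^1$ function $g_k(x):=-\langle v_k,x-x_k\rangle+\|x-x_k\|^2$, whose Lipschitz modulus near $x_k$ is at most $L_k:=\|v_k\|+1$. Since $\alpha\, d(\cdot,S)\le f$ near $\xbar$, and $f\ge 0$ with $f(x_k)=0$, the exact penalty principle converts this constrained minimum into an unconstrained local minimum of $g_k+\tfrac{L_k}{\alpha}f$, whence $0\in\fsd\bigl(g_k+\tfrac{L_k}{\alpha}f\bigr)(x_k)$.

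Because $g_k$ is smooth with $\nabla g_k(x_k)=-v_k$, the exact Fr\'echet sum rule for a smooth summand gives $v_k\in\tfrac{L_k}{\alpha}\fsd f(x_k)$. Applying the max rule to $f=\max\{d(\cdot,A),d(\cdot,B)\}$ together with the identity $\fsd d(\cdot,A)(x_k)\subseteq\fncone{A}(x_k)\cap\B$ (valid since $x_k\in A$), and likewise for $B$, I would split $v_k=v_k^A+v_k^B$ with $v_k^A\in\fncone{A}(x_k)$, $v_k^B\in\fncone{B}(x_k)$ and $\|v_k^A\|,\|v_k^B\|\le L_k/\alpha$. As $L_k\to\|v\|+1$ these are bounded; passing to convergent subsequences and using $x_k\to\xbar$ with \eqref{NC4} produces limits $v^A\in\ncone{A}(\xbar)$, $v^B\in\ncone{B}(\xbar)$ with $v^A+v^B=v$, which is exactly \eqref{cul}.

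The delicate step is the max rule. The Fr\'echet subdifferential of a maximum is \emph{not} the convex hull of the individual subdifferentials, so the clean splitting $v_k=v_k^A+v_k^B$ at $x_k$ is only available \emph{fuzzily}: at auxiliary base points $a_k\to\xbar$ in $A$ and $b_k\to\xbar$ in $B$ rather than at $x_k$ itself, with the inclusions holding up to an $o(1)$ error. Controlling these perturbed base points and errors while keeping the two normal components bounded, so that their limits indeed land in $\ncone{A}(\xbar)$ and $\ncone{B}(\xbar)$, is the technical core handled by \cite[Proposition~3.2]{IofOut08} and \cite[Theorem~6.41]{Pen13}; invoking either of them after Theorem~\ref{t:MAINi}\eqref{t:MAINic} is the cleanest route and is the one I would adopt.
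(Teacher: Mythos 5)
Your proposal matches the paper's own argument exactly: the paper proves Proposition~\ref{P7} precisely by combining the metric characterization of subtransversality in Theorem~\ref{t:MAINi}\eqref{t:MAINic} with \cite[Proposition~3.2]{IofOut08} (or \cite[Theorem~6.41]{Pen13}), which is the route you adopt. Your supplementary penalization sketch is a faithful outline of what those references encode (modulo the minor point that the quadratic penalty $\|x-x_k\|^2$ does not dominate the $o(\|x-x_k\|)$ term in the Fr\'echet normal inequality, so one needs the $\varepsilon\|x-x_k\|$ penalty or a smooth variational description instead), and you correctly identify the fuzzy max rule as the technical core deferred to the citations.
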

\noindent In the convex case, a nonlocal version of Proposition~\ref{P7} together with certain quantitative estimates
can be found in \cite{BakDeuLi05,BauBorLi99}.

If a stronger than \eqref{cul} condition \eqref{cul2} is satisfied in the nonconvex case (with Fr\'echet subdifferentials),
then this property is referred to in \cite{NgZan07} as the \emph{strong Fr\'echet-CHIP}.
Since inclusion \eqref{cul1} always holds, this is equivalent to inclusion \eqref{cul} with Fr\'echet
subdifferentials in place of the limiting ones.
A quantitative (by a positive number $\al$) version of the strong Fr\'echet-CHIP property was studied in the
convex and nonconvex settings in \cite{NgZan07,ZheNg08}:
$$
\fncone{A\cap B}(\bx) \cap \mathbb{B} \subseteq \alpha\paren{\paren{\fncone{A}(\bx) \cap
\mathbb{B}}+\paren{\fncone{B}(\bx) \cap \mathbb{B}}}.
$$
A number of important links with other regularity properties were established there, and
variants of the above property involving Clarke normal cones were also considered.

The ``restricted'' analogue of the dual regularity characterization
\eqref{t:MAINiid} takes the following form:
\begin{equation}\label{CQ}
\rncone{A}{B}(\xbar )\cap \left(-\rncone{B}{A}(\xbar )\right) \subseteq\{0\}.
\end{equation}
This condition is, incidentally, equivalent to the constraint qualification characterization of
regularity of collections of sets given in Definition \ref{D4} \eqref{D4:CQ}.
\xqed
\end{remark}

\begin{remark}[characterizations restricted to Euclidean settings]\label{r:KruTha13}
The following equivalent characterizations of transversality have been established in
\cite[Theorem~2]{KruTha13}).
\begin{enumerate}[(i)]
 \item\label{t:MAINiie-z}
There exists a number $\alpha>0$ such that
$\|v_1+v_2\|>2\alpha$
for all
$v_1\in\ncone{A}(\bx)$ and $v_2\in\ncone{B}(\bx)$ with $\|v_1\|=\|v_2\|=1$.
Moreover, the exact upper bound of all such numbers $\alpha$ equals $\rg$.

\item\label{t:MAINiif}
There exists a number $\alpha<1$ such that
$\|v_1-v_2\|<2\alpha$
for all
$v_1\in\ncone{A}(\bx)$ and $v_2\in\ncone{B}(\bx)$ with $\|v_1\|=\|v_2\|=1$.
Moreover, the exact lower bound of all such numbers $\alpha$, denoted $\rgd$,
satisfies $(\rg)^2+(\rgd)^2=1$.
\end{enumerate}
For brevity, the characterizations above  are in terms of limiting normals only.  The corresponding (approximate)
statements in terms of Fr\'echet and proximal normals can be formulated in a similar way.
These characterizations as well as that of Theorem \ref{t:MAINii}\eqref{t:MAINiig}
for the proximal normal cone only hold in Euclidean spaces.
\xqed
\end{remark}

\begin{remark}
Theorem \ref{t:MAINii}\eqref{t:MAINiij} also has analogues in terms of Fr\'echet and proximal normals.
The expression $-\ang{v_1,v_2}$  can be interpreted as the
cosine of the angle between the vectors $v_1$ and $-v_2$.
Note that, unlike $\rg$, $\rgd$, and $\rgdd$, constant $\rga$ can be negative.
Constant $\rga$ is a modification of another one:
\begin{align*}
\bar c:=\max&\left\{-\ang{v_1,v_2}\mid v_1\in\overline{N}_{A}(\bx)\cap\B,\; v_2\in\overline{N}_{B}(\bx)\cap\B\right\},
\end{align*}
used in \cite{LewLukMal09} for characterizing transversality.
It is easy to check that $\bar c=(\rga)_+$,
and $\bar c<1$ if and only if $\rga<1$.
\xqed
\end{remark}

Because the representation of (sub)transversality via set-valued mappings occupies
a rather special relation to these properties, we detail these separately next.
\begin{theorem}[characterizations via set-valued mappings]\label{t:svm}$~$

Subtransversality of the collection $\{A, B\}$ at a point $\xbar\in A\cap B$ has
the following eqvuialent characterizations.
\begin{enumerate}[\rm (i)]
\item\label{theorem13i} The set-valued mapping $F:\Euclid \rightrightarrows {\Euclid}^2$ given by \eqref{FAB}
with the max norm on $\Euclid^2$
 is metrically subregular at $\xbar $ for $(0,0)$. Moreover,
 \[
 \sr={\rm sr}[F](\bx,(0,0)).
 \]
 The mapping $F$ is strongly metrically subregular at $\xbar$ for $(0,0)$ if and only if
 the collection $\{A, B\}$ is strongly subtransversal there.
\item\label{NewThe1i} The set-valued mapping $G:\Euclid^2 \rightrightarrows {\Euclid}$ given by \eqref{F3}
with the Euclidean norm on $\Euclid^2$
is metrically subregular at $(\xbar ,\xbar )$ for $0$.
Moreover,
\begin{equation}\label{F3Mod}
\sqrt{\frac{2}{1+(\sr)^{-2}}}\le{\rm sr}[G]((\bx ,\bx ),0) \le\frac{2}{[(\sr)\iv-1]_+}.
\end{equation}
%
\end{enumerate}

Transversality of the collection $\{A, B\}$ at a point $\xbar\in A\cap B$ has
the following equivalent characterizations.
\begin{enumerate}[\rm (i$^{\prime}$)]
\item\label{theorem13ii} The set-valued mapping $F:\Euclid \rightrightarrows {\Euclid}^2$
given by \eqref{FAB}
with the max norm on $\Euclid^2$
is metrically regular at $\xbar$ for $(0,0)$.
Moreover,
\[
\rg={\rm r}[F](\bx,(0,0)).
\]
\item\label{NewThe1ii} The set-valued mapping $G:\Euclid^2 \rightrightarrows {\Euclid}$ given by \eqref{F3}
with the Euclidean norm on $\Euclid^2$
is metrically regular at $(\xbar ,\xbar )$ for $0$.
Moreover,
\begin{equation}\label{F3Mod2}
\sqrt{\frac{2}{1+(\rg)^{-2}}}\le{\rm r}[G]((\bx ,\bx ),0) \le\frac{2}{[(\rg)\iv-1]_+}.
\end{equation}
%
\end{enumerate}
\end{theorem}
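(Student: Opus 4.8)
The plan is to split the four characterizations into two groups: the assertions \eqref{theorem13i}, \eqref{theorem13ii} about the mapping $F$ in \eqref{FAB} are essentially a change of language, while the assertions \eqref{NewThe1i}, \eqref{NewThe1ii} about $G$ in \eqref{F3} carry the real content, namely the two-sided modulus estimates. For $F$, using the max norm on $\Euclid^2$ and the fact that the two factors of $F(x)=(A-x)\times(B-x)$ may be minimised independently, I would first record
\[
d\big((0,0),F(x)\big)=\max\{d(x,A),d(x,B)\},\qquad F^{-1}(0,0)=A\cap B .
\]
Thus the metric subregularity inequality \eqref{msr} for $F$ at $\xbar$ for $(0,0)$ is literally condition \eqref{e:T1-1}, and Theorem~\ref{t:MAINi}\eqref{t:MAINic} gives at once the equivalence with subtransversality and the equality $\sr={\rm sr}[F](\bx,(0,0))$ of the two exact upper bounds; the strong variants coincide because $F^{-1}(0,0)=A\cap B$ makes the two isolated-point requirements identical. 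For \eqref{theorem13ii}, replacing $(0,0)$ by $y=(y_1,y_2)$ and using $d(y_i,A-x)=d(x,A-y_i)$, the inequality \eqref{mr} becomes verbatim \eqref{e:T1-2}, so Theorem~\ref{t:MAINii}\eqref{t:MAINiib} yields the equivalence and $\rg={\rm r}[F](\bx,(0,0))$.

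For \eqref{NewThe1i} the engine is the parallelogram-type identity: for $m=\tfrac12(x_1+x_2)$ and any $z$ one has $\|x_1-z\|^2+\|x_2-z\|^2=2\|m-z\|^2+\tfrac12\|x_1-x_2\|^2$. Since $G^{-1}(0)=\{(z,z)\mid z\in A\cap B\}$, minimising over $z\in A\cap B$ gives, for the Euclidean norm on $\Euclid^2$,
\[
d\big((x_1,x_2),G^{-1}(0)\big)^2=2\,d(m,A\cap B)^2+\tfrac12\|x_1-x_2\|^2 .
\]
For the \emph{lower} bound I note that $d\big(0,G(x_1,x_2)\big)=\|x_1-x_2\|$ precisely when $x_1\in A$ and $x_2\in B$ (otherwise it is $+\infty$ and nothing is required); then $\max\{d(m,A),d(m,B)\}\le\tfrac12\|x_1-x_2\|$, so for $\alpha<\sr$ characterization \eqref{e:T1-1} gives $d(m,A\cap B)\le\tfrac1{2\alpha}\|x_1-x_2\|$, whence $d\big((x_1,x_2),G^{-1}(0)\big)\le\sqrt{\tfrac{1+\alpha^{-2}}2}\,\|x_1-x_2\|$ and, letting $\alpha\uparrow\sr$, ${\rm sr}[G]((\bx,\bx),0)\ge\sqrt{2/(1+(\sr)^{-2})}$. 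For the \emph{upper} bound I discard the second coordinate, $d\big((x_1,x_2),G^{-1}(0)\big)\ge d(x_1,A\cap B)$, and take $x_2\in P_B(x_1)$ so that $d\big(0,G(x_1,x_2)\big)=d(x_1,B)$; metric subregularity of $G$ with constant $\alpha$ then forces $\alpha\,d(x_1,A\cap B)\le d(x_1,B)$ for $x_1\in A$ near $\xbar$, i.e.\ condition \eqref{e:P3-1}, so $\srr\ge\alpha$, and \eqref{P3-2} turns this into $\sr\ge\alpha/(\alpha+2)$, which rearranges to $\alpha\le2/[(\sr)^{-1}-1]_+$.

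Assertion \eqref{NewThe1ii} runs along the same lines with a perturbation $w$. One has $G^{-1}(w)=\{(z,z-w)\mid z\in A\cap(B+w)\}$ and $d\big(w,G(x_1,x_2)\big)=\|x_1-x_2-w\|$ on $A\times B$, and applying the identity to $x_1$ and $x_2+w$ gives, with $m_w=\tfrac12(x_1+x_2+w)$,
\[
d\big((x_1,x_2),G^{-1}(w)\big)^2=2\,d\big(m_w,A\cap(B+w)\big)^2+\tfrac12\|x_1-x_2-w\|^2 .
\]
The lower bound invokes \eqref{e:T1-2} with the translations $x_1=0$, $x_2=-w$ to bound $d\big(m_w,A\cap(B+w)\big)$, yielding ${\rm r}[G]((\bx,\bx),0)\ge\sqrt{2/(1+(\rg)^{-2})}$; the upper bound drops the second coordinate and chooses $x_2\in P_B(x_1-w)$ so as to recover \eqref{e:P3-3} with constant $\alpha$, whence $\rgg\ge\alpha$ and, via \eqref{P3-4}, $\rg\ge\alpha/(\alpha+2)$, i.e.\ $\alpha\le2/[(\rg)^{-1}-1]_+$. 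The routine parts are the two algebraic identities and the neighbourhood bookkeeping (checking that $(x_1,x_2)$ and $w$ stay inside the balls on which metric (sub)regularity is assumed, which costs only a fixed shrinking of $\delta$). The genuinely delicate point, and the source of the asymmetry in \eqref{F3Mod}--\eqref{F3Mod2}, is that the lower bounds exploit the \emph{symmetric} distance $d(m_{(\cdot)},\cdot)$ and hence the sharp metric characterizations of Theorem~\ref{t:MAINi}\eqref{t:MAINic} and Theorem~\ref{t:MAINii}\eqref{t:MAINiib}, whereas the upper bounds must discard one coordinate and can reach only the one-sided characterizations \eqref{e:P3-1}/\eqref{e:P3-3}; converting these back to (sub)transversality is exactly the lossy estimate \eqref{P3-2}/\eqref{P3-4}, which is what produces the cruder $2/[\,\cdot\,]_+$ form rather than a clean reciprocal.
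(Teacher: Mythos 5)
Your proposal is correct and follows essentially the same route as the paper's proof: for part \eqref{NewThe1i} you use the same midpoint/parallelogram identity to get the lower bound in \eqref{F3Mod} and the same coordinate-dropping-plus-projection argument, channelled through Theorem~\ref{t:MAINi}\eqref{t:MAINid} and the estimate \eqref{P3-2}, for the upper bound, while parts \eqref{theorem13i}, \eqref{theorem13ii} and \eqref{NewThe1ii} are exactly the reductions the paper makes. The only differences are presentational: where the paper cites \cite{Iof00,Kru06} for the $F$-statements and dispatches \eqref{NewThe1ii} as the ``uniform version'' of \eqref{NewThe1i} (cf.\ the proof of Theorem~\ref{t:MAINii}\eqref{t:MAINiic}), you verify these directly, using exact rather than approximate projections in the converse step (legitimate in the Euclidean setting) and making explicit the recentering observation that only the difference of the two translations matters when recovering \eqref{e:P3-3} --- none of which changes the substance.
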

\proof
{\em Characterization \eqref{theorem13i}}.
This is a consequence of \cite[Proposition~3.5]{Iof00}, \cite[Proposition 8]{Kru06} and
the characterization of Theorem~\ref{t:MAINi}\eqref{t:MAINic}.\\

{\em Characterization \eqref{NewThe1i}}.
Suppose $\{A,B\}$ is subtransversal at $\xbar$.
By Theorem~\ref{t:MAINi}\eqref{t:MAINic},
there exist numbers $\al>0$ and $\delta>0$ such that condition \eqref{e:T1-1} holds true.
Set $\al':=\sqrt{\frac{2}{1+\al^{-2}}}$.
We show that
\begin{equation}\label{F3MetSub}
\al' d((x_1,x_2), G^{-1}(0)) \le d(0, G(x_1,x_2)\}\qdtx{for all} x_1,x_2\in B_{\delta}(\xbar ).
\end{equation}
If $(x_1,x_2)\notin A\times B$, then $ G(x_1,x_2)=\emptyset$ and the inequality holds trivially.
Take any $(x_1,x_2)\in A\times B$ with $x_1,x_2\in B_{\delta}(\xbar )$.
Note that $ G(x_1,x_2)=x_1-x_2$ and $ G^{-1}(0) = \{(x,x)\mid  x\in A\cap B\}$.
Set $\hat x:=\frac{x_1+x_2}{2}$.
Then $\hat x\in B_{\delta}(\xbar )$, $x_1-\hat x=\hat x-x_2=\frac{x_1-x_2}{2}$, and, thanks to \eqref{e:T1-1},
\begin{equation}\label{Th3_Est1}
\alpha d(\hat x,A\cap B) \le\max\left\{d(\hat x,A),d(\hat x,B)\right\}
\le \max\left\{\norm{\hat x-z},\norm{\hat x-x_2}\right\}
=\frac{1}{2}\norm{x_1-x_2}.
\end{equation}
For any $x\in A\cap B$, we have
\begin{align*}
\|(x_1,x_2)-(x,x)\|^2 & = \norm{x_1-x}^2+\norm{x_2-x}^2=  \norm{x_1-\hat x+\hat x-x}^2+\norm{x_2-\hat x+\hat x-x}^2\\ &
= \norm{x_1-\hat x}^2+2\ang{x_1-\hat x,\hat x-x}+ \norm{x_2-\hat x}^2+2\ang{x_2-\hat x,\hat x-x}+ 2\norm{\hat x-x}^2\\
&= \norm{x_1-\hat x}^2+\norm{x_2-\hat x}^2+2\norm{\hat x-x}^2= \frac{1}{2}\norm{x_1-x_2}^2 + 2\norm{\hat x-x}^2.
\end{align*}
Hence,
\begin{align*}
d^2((x_1,x_2), G^{-1}(0))\le \frac{1}{2}\norm{x_1-x_2}^2 + 2d^2(\hat x,A\cap B),
\end{align*}
and, thanks to \eqref{Th3_Est1},
\begin{align}\label{e:NewThe1iq1}
\al'd((x_1,x_2), G^{-1}(0))&\le \al'\sqrt{\frac{1}{2}\left(1+\frac{1}{\alpha^2}\right)} \norm{x_1-x_2}=d(0, G(x_1,x_2)).
\end{align}

Conversely, suppose that $ G$ is metrically subregular at $(\xbar ,\xbar )$ for $0$, i.e.,
\eqref{F3MetSub} is satisfied for some numbers $\alpha'>0$ and $\delta>0$.
Fix an arbitrary number $\alpha\in (\alpha'/2,\alpha')$.
For any $z\in A\cap\B_{\frac{\delta}{3}}(\xbar )$, we pick a $w\in B$ such that $\norm{x_1-x_2}\le \frac{\alpha'}{\alpha}d(z,B)$.
Note that $w\in\B_{\delta}(\xbar )$ since
$$
\norm{x_2-\xbar }\le \norm{x_2-z}+\norm{x_1-\xbar }\le \frac{\alpha'}{\alpha}d(z,B)+\norm{x_1-\xbar }\le
\left(\frac{\alpha'}{\alpha}+1\right)\norm{x_1-\xbar }< \left(\frac{\alpha'}{\alpha}+1\right)\frac{\delta}{3}<\delta.
$$
Then, in view of \eqref{F3MetSub}, we have
\begin{align}\notag
\al d(z,A\cap B)&\le\al d((x_1,x_2),\{(x,x)\mid  x\in A\cap B\})=\al d((x_1,x_2), G^{-1}(0))
\\\label{The3_Est2}
&\le\frac{\alpha}{\alpha'}d(0, G(x_1,x_2))
=\frac{\alpha}{\alpha'}\norm{x_1-x_2} \le d(z,B).
\end{align}
By 
Theorem~\ref{t:MAINi}\eqref{t:MAINid}, condition \eqref{The3_Est2}
implies the subtransversality of $\{A,B\}$
at $\bar{x}$ and the estimate $\sr \ge 1/(1+2\al\iv)$, or equivalently, $\al\le2/[(\sr)\iv-1]_+$.
Passing to the limit in the last inequality as $\al\to\al'$ and then as
$\al'\to{\rm sr}[G]((\bar{x},\bar{x}),0)$, we arrive at the second inequality in \eqref{F3Mod}.

The equivalence of the strong subtransversality of $\{A,B\}$
and the strong metric subregularity of $G$ is straightforward since $x$ is an
isolated point of $\{A,B\}$ if and only if $(x,x)$ is an isolated point of  $G^{-1}(0)$.\\

{\em Characterization \eqref{theorem13ii}}.
This is a consequence of \cite[Proposition~3.5]{Iof00}, \cite[Proposition 8]{Kru06} and
the characterization
Theorem~\ref{t:MAINii}\eqref{t:MAINiib}.\\

{\em Characterization \eqref{NewThe1ii}}.
This is a consequence of Theorem \ref {t:MAINi}\eqref{NewThe1i} as it claims the
equivalence of the uniform versions of the properties in Theorem \ref {t:MAINi}\eqref{NewThe1i}.
The estimates established in Theorem \ref {t:MAINi}\eqref{NewThe1i} are preserved; cf. the proof of
Theorem~\ref{t:MAINii}\eqref{t:MAINiic}.
%
%
%
\qed

\begin{remark}[collections of sets and set-valued mappings]
The characterizations in Theorem \ref{t:svm}  provide a one-to-one
correspondence between regularity
properties of collections of sets and the corresponding ones of set-valued mappings.
They remain true for arbitrarily finite collections of sets.

The `positive part' sign in the \RHS s of the conditions \eqref{F3Mod} and \eqref{F3Mod2} is
used to accommodate for the case when $\bx\in\Int A\cap\Int B$ and, hence, $\rg=+\infty$.
In this case, conditions \eqref{F3Mod} and \eqref{F3Mod2} impose no upper bound on the
values ${\rm sr}[G]((\xbar ,\xbar ),0)$ and ${\rm r}[G]((\xbar ,\xbar ),0)$.
Recall that in the current article the assumption $\bx\in\cl A\cap\cl B$ is in forth, so the
`positive part' sign in the \RHS s of the conditions \eqref{F3Mod} and \eqref{F3Mod2} can be dropped.

In view of characterization \eqref{NewThe1ii}, the property of
\emph{regular intersection} of sets considered
in \cite[Section~5]{LewMal08} is equivalent to their collection being transversal.
This fact also follows from \cite[Theorem~5.1]{LewMal08}.
The regularity estimate obtained in parts \eqref{NewThe1i} and
\eqref{NewThe1ii} coincides with $\rgdd$.

Thanks to characterizations of Theorem \ref {t:svm},
when investigating regularity
properties of collections of sets one can employ the well developed regularity
theory of \SVM s, particularly, the celebrated
\emph{coderivative criterion} for metric regularity \cite{Mor06,RocWet98,DonRoc09} (see also
\cite{Kru88}) as well as criteria of metric subregularity based on \emph{outer coderivatives} (see
\cite{IofOut08,Kru15, ZheNg10.1,ZheNg10.2,ZheNg12}).
On the other hand, related studies in \cite[Theorem~7]{Kru09}, \cite[Theorem 5.1(ii)]{KruTha15}
have shown that regularity criteria developed for collections of
sets can be used when studying the corresponding properties of \SVM s.
The coderivatives (Fr\'echet, limiting or other) of the mappings \eqref{FAB} and \eqref{F3} employed in
Theorem~\ref{t:svm} admit simple representations in terms of the corresponding
normal cones to the sets involved in their definitions; see \cite[the proof of Theorem~8]{Kru09},
\cite[Lemma~5.1]{LewMal08},  \cite[P.~491]{LewLukMal09}, and \cite[Theorem~7.12 and Corollary~7.13]{Iof15}.
As a consequence, the coderivative criteria of regularity of \SVM s easily translate into the dual
characterizations of the corresponding regularity properties of collections of sets.
Not surprisingly, this way one rediscovers (some of) the known characterizations collected in
Theorem~\ref{t:svm};
see \cite[Theorem~8]{Kru09}, \cite[Theorem~5.1]{LewMal08},
and \cite[Theorems~7.12 and 7.15]{Iof15}.
\xqed
\end{remark}


The characterization of subtransversality given in Theorem \ref{t:MAINi}\eqref{t:MAINib}
and the definition of transversality shows that
transversality implies subtransversality (see Theorem \ref{t:tsr suff} below).
Alternatively, the implication is also immediate from Theorem \ref{t:MAINi}\eqref{t:MAINic} and
Theorem \ref{t:MAINii}\eqref{t:MAINiib}.
There are a number of other useful sufficient conditions for subtransversality, detailed
in the next theorem.

\begin{theorem}[sufficient conditions for subtransversality]\label{t:tsr suff}
If one of the following hold, then $\{A,B\}$ is subtransversal at $\xbar $.
\begin{enumerate}[\rm (i)]
\item\label{t:tsr suff 0} The collection $\{A,B\}$ is transversal at $\xbar $. Moreover,
$\rg\le\sr$.
\item\label{t:tsr suff iv}
There exist numbers $\alpha>0$ and $\delta>0$ such that ${\|v_1+v_2\|>\alpha}$
for all $x\in\B_{\de}(\bar x)$, $a\in A$, $b\in B$ with $0<\norm{x-a}<\de$, $0<\norm{x-b}<\de$,
and all nonzero $v_1\in\fncone{A}(a)$, $v_2\in\fncone{B}(b)$
satisfying
\begin{align*}
\|v_1\|+\|v_2\|=1,\quad
\frac{\ang{v_1,x-a}}{\|v_1\|\;\|x-a\|}>1-\de,\quad
\frac{\ang{v_2,x-b}}{\|v_2\|\;\|x-b\|}>1-\de.
\end{align*}
Moreover, $\sr\ge\al$.
%
\item\label{t:collection relations iii}
The sets $A$ and $B$ are \emph{intrinsically transversal} at $\xbar$  -- Definition \ref{D4}\eqref{D4:DIL2}.
\item\label{t:separable and hoelder}
The set $B$ intersects $A$ separably at $\xbar$ and $B$ is $0$-H\"older regular relative to $A$ at $\xbar$
with an adequate compromise between the constants.
\end{enumerate}
\end{theorem}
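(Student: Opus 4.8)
The plan is to reduce every one of the four implications to the one-sided metric error bound of Theorem~\ref{t:MAINi}\eqref{t:MAINid}, namely $\alpha\,d(x,A\cap B)\le d(x,B)$ for $x\in A\cap\B_{\delta}(\xbar)$, or its symmetric version \eqref{e:T1-1} in Theorem~\ref{t:MAINi}\eqref{t:MAINic}. Statement~\eqref{t:tsr suff 0} is then immediate: by the metric characterization of transversality in Theorem~\ref{t:MAINii}\eqref{t:MAINiib}, specializing the translations to $x_1=x_2=0$ in \eqref{e:T1-2} yields exactly the subtransversality inequality \eqref{e:T1-1}. Hence every constant $\alpha$ admissible in \eqref{e:T1-2} (with some $\delta$) is admissible in \eqref{e:T1-1} (with the same $\delta$), so the admissible set for transversality is contained in that for subtransversality, and $\rg\le\sr$ follows by taking suprema.

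Statement~\eqref{t:tsr suff iv} is the technical core, and its full proof is carried out in the companion article~\cite{KruLukTha}; here I only indicate the route. One argues against Theorem~\ref{t:MAINi}\eqref{t:MAINid} by contradiction: if subtransversality fails, then choosing $\alpha=\delta=1/k$ produces $x_k\to\xbar$ in $A$ with $d(x_k,B)/d(x_k,A\cap B)\to 0$. Taking $b_k\in P_B(x_k)$, the vector $v_2:=(x_k-b_k)/\norm{x_k-b_k}$ is a proximal, hence Fr\'echet, normal to $B$ at $b_k$ that is perfectly aligned with $x_k-b_k$. A descent/penalization argument then manufactures a companion $v_1\in\fncone{A}(a)$ obeying the alignment requirement with $x_k-a$, normalized so that $\norm{v_1}+\norm{v_2}=1$, for which $\norm{v_1+v_2}\to 0$, contradicting the hypothesis. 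The delicate point is precisely the construction of $v_1$ with the prescribed alignment and normalization; this is where the argument of~\cite{KruLukTha} does the real work, and to which I defer.

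Statement~\eqref{t:collection relations iii} is the intrinsic-transversality implication of Drusvyatskiy, Ioffe and Lewis~\cite{DruIofLew15}; within the present framework I would obtain it by targeting Theorem~\ref{t:MAINi}\eqref{t:MAINid}. For $x\in A$ near $\xbar$ set $b\in P_B(x)$; if $b\in A$ then $b\in A\cap B$ and $d(x,A\cap B)\le\norm{x-b}=d(x,B)$ trivially, so assume $x\in A\setminus B$ and $b\in B\setminus A$. Since $(x-b)/\norm{x-b}$ is a proximal normal to $B$ at $b$, the $B$-term in the proximal version of Definition~\ref{D4}\eqref{D4:DIL2} vanishes, whence intrinsic transversality forces $d\bigl((b-x)/\norm{x-b},\,\pncone{A}(x)\bigr)>\alpha$; that is, the direction from $x$ toward its $B$-projection has a uniformly bounded-below tangential component at $x$. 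This yields a uniform one-step decrease of the distance to $B$ along alternating projections, hence linear convergence, and summing the displacements bounds $d(x,A\cap B)$ by a multiple of $d(x,B)$. The same implication also follows from~\eqref{t:tsr suff iv}: a failure of the dual condition, via $\norm{v_1}+\norm{v_2}=1$ and the two alignment inequalities, would force $v_1\approx-v_2$ with $\norm{v_1}\approx\norm{v_2}\approx 1/2$, make $a,x,b$ nearly collinear, and so place $(b-a)/\norm{a-b}$ near $\fncone{A}(a)\subseteq\ncone{A}(a)$ and $(a-b)/\norm{a-b}$ near $\fncone{B}(b)\subseteq\ncone{B}(b)$, contradicting Definition~\ref{D4}\eqref{D4:DIL2}.

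Statement~\eqref{t:separable and hoelder} is the genuinely new geometric assertion, and I would again target Theorem~\ref{t:MAINi}\eqref{t:MAINid}. Starting from $x_0=x\in A$ near $\xbar$, run alternating projections $b_k\in P_B(x_k)$ and $x_{k+1}\in P_A(b_k)$. Separable intersection (Definition~\ref{D4}\eqref{D4:NolRon}) supplies the angle bound $\ang{x_k-b_k,\,x_{k+1}-b_k}<\alpha\,\norm{x_k-b_k}\,\norm{x_{k+1}-b_k}$ with $\alpha<1$ at each projection vertex $b_k$, forcing a genuine bend of the projection path, while $0$-H\"older regularity of $B$ relative to $A$ (Definition~\ref{d:set regularities}\eqref{d:Hoelder} with $\sigma=0$ and constant $c$) rules out degenerate tangency, so that the bend produced by separability translates into a quantitative drop of $\norm{x_{k+1}-b_k}$ below $\norm{x_k-b_k}$. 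Combining the two gives a one-step contraction $d(x_{k+1},A\cap B)\le\gamma\,d(x_k,A\cap B)$ with $\gamma<1$, and summing the geometric series bounds the total displacement by $(1-\gamma)^{-1}$ times the first step, yielding $\alpha'\,d(x_0,A\cap B)\le d(x_0,B)$ for a suitable $\alpha'>0$, i.e. subtransversality. The main obstacle is exactly this one-step contraction: extracting an explicit $\gamma<1$ from the interplay of the separability cosine $\alpha$ and the H\"older constant $c$, and pinning down the admissible relation between them --- the ``adequate compromise between the constants'' of the statement --- calls for a careful planar estimate on the triangle $x_k,b_k,x_{k+1}$, in which the ball-exclusion is used to keep $x_{k+1}-b_k$ from being too closely aligned with $x_k-b_k$.
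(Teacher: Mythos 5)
Your handling of parts \eqref{t:tsr suff 0}--\eqref{t:collection relations iii} is consistent with what the paper does: part \eqref{t:tsr suff 0} is proved identically (specialize $x_1=x_2=0$ in \eqref{e:T1-2} to recover \eqref{e:T1-1}, then compare admissible constants); for part \eqref{t:tsr suff iv} the paper itself gives no proof and defers to \cite{KruLukTha}, exactly as you do; and for part \eqref{t:collection relations iii} the paper simply cites \cite[Theorem~6.2]{DruIofLew15} --- your opening move there (with $a=x\in A\setminus B$ and $b\in P_B(x)$, the proximal normal $(x-b)/\norm{x-b}\in\pncone{B}(b)$ annihilates the $B$-term of the max, forcing $d\bigl((b-x)/\norm{x-b},\pncone{A}(x)\bigr)>\alpha$) is correct, though converting that lower bound into a summable one-step decrease is itself the substance of the cited theorem and is only gestured at.

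The genuine gap is in part \eqref{t:separable and hoelder}, the one assertion this theorem actually proves anew, and it sits precisely where you wrote ``the main obstacle'': the one-step contraction. You defer it to ``a careful planar estimate on the triangle,'' but no trigonometry is needed, and without this step the new content of the result --- including the ``adequate compromise,'' which the paper makes explicit as $\alpha+2c<1$ --- remains unproven. The paper's argument is two lines: fix $\gamma$ with $\max\left\{\alpha+2c,\tfrac{1}{1+c^2}\right\}<\gamma<1$ and take $a\in A$, $b\in P_B(a)$, $a_+\in P_A(b)$ near $\xbar$. If $\norm{a_+-b}\le\tfrac{1}{1+c^2}\norm{b-a}$, the contraction $\norm{a_+-b}\le\gamma\norm{b-a}$ is trivial. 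Otherwise $\norm{a-b}<(1+c^2)\norm{a_+-b}$, so $a$ (which lies in $P_B^{-1}(b)$ since $b\in P_B(a)$) falls inside the H\"older exclusion ball at the pair $(b,a_+)$, and the contrapositive of the ball-exclusion yields the scalar inequality $\ip{b-a_+}{a-a_+}\le c\norm{b-a_+}\norm{a-a_+}$, while separable intersection at the vertex $b$ yields $\ip{a_+-b}{a-b}\le\alpha\norm{a_+-b}\norm{a-b}$. Adding these --- the left-hand sides sum to exactly $\norm{a_+-b}^2$ --- and using $\norm{a-a_+}\le\norm{a-b}+\norm{b-a_+}\le 2\norm{a-b}$ (valid because $a_+\in P_A(b)$ and $a\in A$) gives $\norm{a_+-b}\le(\alpha+2c)\norm{b-a}\le\gamma\norm{b-a}$. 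The rest of your plan (alternating projections, geometric series, total displacement at most $\tfrac{1+\gamma}{1-\gamma}\norm{a-b}$, then Theorem~\ref{t:MAINi}\eqref{t:MAINid}) matches the paper, which invokes the routine of \cite[Theorem~5.2]{LewLukMal09} for exactly that summation. So your skeleton is right, but the postponed step is the theorem; note also that your heuristic for H\"older regularity (``rules out degenerate tangency'') obscures its actual role, which is to supply a plain inner-product bound at the vertex $a_+$ to be added linearly to the separability bound at $b$.
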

\begin{proof}
{\em Condition \eqref{t:tsr suff 0}.} This follows immediately from
Theorem \ref{t:MAINi}\eqref{t:MAINic} and
Theorem \ref{t:MAINii}\eqref{t:MAINiib}.\\

{\em Condition \eqref{t:tsr suff iv}. }
This result is new.  It expands and improves (in the setting adopted in the current article)
\cite[Theorem~4.1]{KruTha15}.  We do not have the space to provide the proof here, but refer
readers to the preprint \cite{KruLukTha}.\\

{\em \eqref{t:collection relations iii}}.
This was shown in \cite[Theorem~6.2]{DruIofLew15}.\\

\eqref{t:separable and hoelder}.
Suppose that $B$ intersects $A$ separably at $\xbar$ with constant $\alpha>0$ together
with a neighborhood $\Ball_{\delta}(\xbar)$ of $\xbar$
(Definition \ref{D4}\eqref{D4:NolRon}) and that $B$ is $0$-H\"older regular relative to $A$
at $\xbar$ with constant $c>0$ together with the same neighborhood $\Ball_{\delta}(\bx)$ of $\xbar$
(Definition \ref{d:set regularities}\eqref{d:Hoelder}).
Suppose that $\alpha+2c<1$.
We show that $\{A,B\}$ is subtransversal at $\bx$.
To see this, choose a number $\gamma$ satisfying
$\max\left\{\alpha+2c,\tfrac{1}{1+c^2}\right\} <\gamma<1$.
Thanks to Theorem \ref{t:MAINi}\eqref{t:MAINid},
it suffices to check the existence of a $\deltabar>0$ such that
\begin{equation}\label{t:21}
\frac{1-\gamma}{1+\gamma} d(a,A\cap B)\le d(a,B)\;\;\mbox{for all}\;\; a\in A\cap\B_{\deltabar}(\bx ).
\end{equation}
Let us first check that for any $a\in A\cap\B_{\delta}(\bx )$, $b\in P_B(a) \cap \B_{\delta}(\bx )$, and
$a_+\in P_A(b)\cap \B_{\delta}(\bx )$, one has
\begin{equation}\label{2}
\|a_+-b\| \le \gamma\|b-a\|.
\end{equation}
If $\|a_+-b\| \le \tfrac{1}{1+c^2}\|b-a\|$, then \eqref{2} holds true since $\tfrac{1}{1+c^2}\le \gamma$.
Suppose that $\|a_+-b\| > \tfrac{1}{1+c^2}\|b-a\|$.
The two regularity assumptions then yield the following inequalities:
\begin{align*}
\ip{a_+-b}{a-b} &\le \alpha\|a_+-b\|\|a-b\|,\\
\ip{b-a_+}{a-a_+} &\le c\|b-a_+\|\|a-a_+\|.
\end{align*}
Adding these inequalities and using $\|a-a_+\|\le 2\|a-b\|$, we get
$$
\|a_+-b\|\le (\alpha+2c)\|b-a\|\le \gamma\|b-a\|.
$$
Hence, \eqref{2} is proved.
Employing the basic routine originated in the proof of \cite[Theorem 5.2]{LewLukMal09}
implies the existence of a number $\deltabar>0$ such that for any $a\in A\cap\B_{\deltabar}(\bx )$,
there exists an $\tilde{x}\in A\cap B$ such that $\|a-\tilde{x}\|\le \tfrac{1+\gamma}{1-\gamma}\|a-b\|$.
Since $d(a,A\cap B)\le \|a-\tilde{x}\|$ and $\|a-b\|=d(a,B)$, condition \eqref{t:21} is proved.
%
\qed
\end{proof}

\begin{remark}
In light of the framework presented above, the transversality has been shown to be equivalent
to properties \eqref{D4:lin-reg} of Definition \ref{D4} and (in the case of smooth manifolds) \eqref{Tran-1},
and strictly to imply the other properties in Definition \ref{D4}.
\xqed
\end{remark}

\begin{remark}[entaglement of elemental regularity and regularity of collections of sets]
Proposition \ref{t:collection relations a}\eqref{t:collection relations va} and
Theorem \ref{t:tsr suff}\eqref{t:separable and hoelder} demonstrate that regularity of individual sets has implcations
for the regularity of the collection of sets.
The converse entaglement has also been observed in \cite[Proposition 8]{NolRon15}: if $A$ and $B$ are intrinsically
transversal at $\xbar$ with constant $\alpha$, then $A$ is $\sigma$-H\"older regular at $\xbar$ relative to $B$ for
every $\sigma\in [0,1)$ with any constant
$c< \frac{\alpha^2}{1-\alpha^2}$.

As a consequence of Proposition \ref{t:set regularity equivalences}\eqref{t:Hoelder char}, if
$A$ and $B$ 
are
intrinsically transversal at $\xbar$ with constant $\alpha\in (0,1]$ and, in addition,
there is a neighborhood $W$ of
$\xbar$ and a positive constant $\varepsilon<\frac{\alpha}{\sqrt{1-\alpha^2}}$
such that for each $(a,v)\in V$
defined in \eqref{e:Hoelder normals},
condition \eqref{e:xbar neighborhood} holds true,
then $A$ is elementally subregular of any order $\sigma\in[0,1)$ relative to
$ A\cap P^{-1}_B\paren{a+v}$ at $\xbar$
for each $(a,v)\in V$ with constant $\varepsilon$ and the respective neighborhood
$U(a,v)$.
\xqed
\end{remark}

\subsection{Special cases: convex sets, cones and manifolds}
A number of simplifications are possible in the convex setting, for cones and for manifolds.

The next representations follow from the simplified representations
for $\rg$ that are possible for convex sets or cones (cf. \cite[Propositions~13 and 15]{Kru05}).
\begin{proposition}[collections of convex sets]\label{P2-2}
Suppose $A$ and $B$ are convex.
The collection $\{A,B\}$ is transversal at $\xbar $ if and only if
one of the next two equivalent conditions holds true:
\begin{enumerate}
\item
there exists a number $\alpha>0$ such that
\begin{equation}\label{P2-2-1}
(A-x_1)\cap(B-x_2)\cap\B_{\rho}(\xbar )\neq \emptyset
\end{equation}
for all $\rho>0$ and all $x_1,x_2\in X$ with $\max\{\|x_1\|,\|x_2\|\}<\al\rho$;
\item
there exists a number $\alpha>0$ such that condition \eqref{P2-2-1} is satisfied
for some $\rho>0$ and all $x_1,x_2\in X$ with $\max\{\|x_1\|,\|x_2\|\}<\al\rho$.
\end{enumerate}
Moreover, the exact upper bound of all numbers $\alpha$ in any of the above conditions equals $\rg$.
\end{proposition}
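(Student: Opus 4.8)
The plan is to normalize by translation so that $\xbar=0$, replacing $A,B$ by $A-\xbar,B-\xbar$; then $0\in A\cap B$ and convexity gives the star-shapedness $tA\subseteq A$, $tB\subseteq B$ for every $t\in[0,1]$. After this normalization condition \eqref{P2-2-1} reads $(A-x_1)\cap(B-x_2)\cap(\rho\B)\neq\emptyset$, which is exactly the defining inclusion of transversality in Definition~\ref{D2}\eqref{D2ii} with the base points frozen at $a=b=\xbar$. So the whole statement reduces to showing that, for convex sets, two features of Definition~\ref{D2}\eqref{D2ii} are redundant: the moving base points $a\in A$, $b\in B$ near $\xbar$ may be replaced by $\xbar$ itself, and the radius may be quantified either universally (condition (1)) or existentially (condition (2)) without affecting the admissible $\alpha$. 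These are precisely the convex and cone simplifications of $\rg$ recorded in \cite[Propositions~13 and 15]{Kru05}, so my strategy is to reprove them in the present two-set language and track the constant.

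First I would collect the free implications. Clearly (1)$\Rightarrow$(2). Moreover transversality immediately yields (2) with the same $\alpha$: freezing $a=b=\xbar$ and keeping any single radius $\rho<\delta$ in Definition~\ref{D2}\eqref{D2ii} gives \eqref{P2-2-1}. This already shows $\rg\le\alpha_{(2)}$ and $\alpha_{(1)}\le\alpha_{(2)}$, where $\alpha_{(1)},\alpha_{(2)}$ denote the suprema of admissible constants in (1),(2). The reverse inequalities are where convexity enters. The clean half is transfer toward smaller radii: if \eqref{P2-2-1} holds at some $\rho_0$ for all $\max\{\|x_1\|,\|x_2\|\}<\alpha\rho_0$, then for $\rho\le\rho_0$, writing $t=\rho/\rho_0\le1$ and applying the hypothesis to the inflated perturbations $x_i/t$, any witness $z$ produces the witness $tz$ at scale $\rho$, since $tz+x_i=t(z+x_i/t)\in tA\subseteq A$ (resp.\ $B$) and $\|tz\|<\rho$. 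This is the scale-invariance mechanism underlying the cone case \cite[Proposition~15]{Kru05} and shows the ratio $\alpha$ is preserved as $\rho$ decreases.

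It remains to upgrade ``for some $\rho$'' to ``for all $\rho$'' and to reinstate the general base points $a,b$, both with no loss in $\alpha$; this is the content I would borrow from \cite[Proposition~13]{Kru05}. For the base points, the argument is again star-shapedness: a solution of \eqref{D2-2} at $a=b=\xbar$ can be combined convexly with the nearby admissible points $a\in A\cap\B_\delta(\xbar)$, $b\in B\cap\B_\delta(\xbar)$ to produce a solution for the general centers once $\delta$ is small relative to $\rho$, which is exactly the regime of Definition~\ref{D2}\eqref{D2ii}. Chaining transversality $\Rightarrow$ (2) $\Rightarrow$ (1) $\Rightarrow$ transversality with matching constants then forces $\alpha_{(1)}=\alpha_{(2)}=\rg$, the last identification being the definition of $\rg$ in Definition~\ref{D2}\eqref{D2ii}.

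The hard part will be the exact-constant bookkeeping in the direction (2)$\Rightarrow$(1), i.e.\ transferring \eqref{P2-2-1} toward \emph{larger} radii, since the naive star-shapedness estimate runs the wrong way there. The honest route is not to rescale a single witness but to phrase both reductions as equalities of the scale-indexed regularity moduli $\theta_\rho$ of \cite{Kru05} and to invoke their convex monotonicity and cone scale-invariance directly; this is where I would lean on \cite[Propositions~13 and 15]{Kru05} rather than re-deriving the estimates, and it is also the step where one must be most careful that the suprema over $\alpha$ in (1) and (2) genuinely agree rather than merely bound one another.
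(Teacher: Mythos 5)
The paper itself offers no argument for this proposition beyond the citation of \cite[Propositions~13 and 15]{Kru05}, so your overall strategy -- reduce everything to those results -- matches the paper's treatment, and several of your added ingredients are sound: the normalization $\xbar=0$, the free implications transversality $\Rightarrow$ (2) and (1) $\Rightarrow$ (2), the downward-scaling lemma (from \eqref{P2-2-1} at $\rho_0$ with constant $\alpha$ to every $\rho\le\rho_0$ with the \emph{same} $\alpha$, via $tA\subseteq A$, $tB\subseteq B$), and the convex-combination mechanism for reinstating the moving base points $a,b$. One slip there: ``once $\delta$ is small relative to $\rho$, which is exactly the regime of Definition~\ref{D2}\eqref{D2ii}'' is backwards, since Definition~\ref{D2}\eqref{D2ii} has $\rho\in(0,\delta)$; what the argument actually needs is $\delta$ small relative to the \emph{fixed} scale $\rho_0$ at which the frozen condition is invoked (e.g.\ $\delta<(\alpha-\alpha')\rho_0$, producing the witness $z=t(w-\xbar)$ with $t=\rho/\rho_0$), and it yields every constant $\alpha'<\alpha$ rather than $\alpha$ itself -- enough for equality of the suprema, but not the literal ``no loss in $\alpha$'' you claim.

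The genuine gap is the step you yourself flag, (2) $\Rightarrow$ (1), and the repair you propose -- invoking ``convex monotonicity and cone scale-invariance'' of the scale-indexed moduli -- cannot close it. Your own scaling lemma shows precisely that $\rho\mapsto\theta_\rho/\rho$ is \emph{nonincreasing}; constancy of the ratio, which is what passing to arbitrarily large $\rho$ requires, holds for cones (that is Proposition~\ref{P2-3} and \cite[Proposition~15]{Kru05}), not for general convex sets. In fact condition (1), read literally with $\rho$ ranging over all of $(0,\infty)$, is strictly stronger than transversality: take $A$ the closed unit disk in $\R^2$, $B=\R\times\{0\}$, $\xbar=(1,0)$; then $\ncone{A}(\xbar)\cap(-\ncone{B}(\xbar))=\{0\}$, so $\{A,B\}$ is transversal by Theorem~\ref{t:MAINii}\eqref{t:MAINiid}, yet for any $\alpha>0$, choosing $\rho$ large and $x_1=(0,c)$, $x_2=(0,-c)$ with $\tfrac12<c<\alpha\rho$ gives $(A-x_1)\cap(B-x_2)=\emptyset$. (The same failure occurs with $A=B$ the unit disk at $\xbar=0$, where $\rg=\infty$ by Example~\ref{E2}.) So no constant bookkeeping can make (2) $\Rightarrow$ (1) work as written; the reading consistent with \cite[Proposition~13]{Kru05} -- and the one under which your downward-scaling lemma immediately finishes the proof with matching constants -- is (1) with ``for all $\rho\in(0,\delta)$ for some $\delta>0$'' (all sufficiently small $\rho$), the all-$\rho$ version being correct only in the conical case. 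Your proof plan is completable once (1) is so interpreted, and with that caveat it supplies details the paper leaves entirely to the citation.
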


\begin{proposition}[cones]\label{P2-3}
Suppose $A$ and $B$ are cones.
The collection $\{A,B\}$ is transversal at $\xbar $ if and only if
there exists a number $\alpha>0$ such that
\begin{equation*}\label{P2-3-1}
(A-a-x_1)\cap (B-b-x_2)\cap\B\neq \emptyset
\end{equation*}
for all $a\in A,\;b\in B$ and all $x_1,x_2\in X$ with $\max\{\|x_1\|,\|x_2\|\}<\al$.
Moreover, the exact upper bound of all numbers $\alpha$ in any of the above conditions equals ${\rm r}[A,B](0)$.
\end{proposition}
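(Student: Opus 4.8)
The plan is to exploit the positive homogeneity of $A$ and $B$, which lets us fix the reference point at the apex $\xbar=0$ (as the notation ${\rm r}[A,B](0)$ indicates) and normalize the scale, so that the ball $\rho\B$ in Definition~\ref{D2}\eqref{D2ii} may be replaced by the unit ball $\B$ and the localizing restrictions $a\in A\cap\B_\delta(0)$, $b\in B\cap\B_\delta(0)$, $\rho\in(0,\delta)$ may be dropped altogether. Concretely, I would prove the equivalence by showing that the displayed cone condition holds with a given constant $\alpha>0$ if and only if transversality holds with the \emph{same} constant $\alpha$; since by Definition~\ref{D2}\eqref{D2ii} the value ${\rm r}[A,B](0)$ is the supremum of the admissible $\alpha$ for transversality, matching the constants in both directions will yield the asserted equality of suprema.

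First I would show that the cone condition with constant $\alpha$ implies transversality with constant $\alpha$ (and in fact with every $\delta>0$). Fix $\rho>0$, $a\in A$, $b\in B$ and $x_1,x_2$ with $\max\{\|x_1\|,\|x_2\|\}<\alpha\rho$. Dividing by $\rho$ and using $A/\rho=A$, $B/\rho=B$, the points $a/\rho\in A$, $b/\rho\in B$ and $x_i/\rho$ (with $\|x_i/\rho\|<\alpha$) satisfy the hypotheses of the cone condition, so there is a $z'\in\B$ with $z'+a/\rho+x_1/\rho\in A$ and $z'+b/\rho+x_2/\rho\in B$. Multiplying back by $\rho$ and invoking homogeneity once more, $\rho z'$ lies in $(A-a-x_1)\cap(B-b-x_2)\cap(\rho\B)$, which is therefore nonempty. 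This holds for all $\rho>0$ and all $a\in A$, $b\in B$ without any localization, so in particular transversality holds at $0$ with constant $\alpha$; hence ${\rm r}[A,B](0)$ is at least the supremum of the cone-condition constants.

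For the converse I would scale in the opposite direction. Suppose transversality holds with constants $\alpha$ and $\delta$. Given $a\in A$, $b\in B$ and $x_1,x_2$ with $\max\{\|x_1\|,\|x_2\|\}<\alpha$, choose $t\in(0,\delta)$ small enough that $t\|a\|<\delta$ and $t\|b\|<\delta$. Then $ta\in A\cap\B_\delta(0)$, $tb\in B\cap\B_\delta(0)$, the radius $\rho:=t$ lies in $(0,\delta)$, and $\|tx_i\|<t\alpha=\alpha\rho$, so the transversality inequality produces a point $w\in(A-ta-tx_1)\cap(B-tb-tx_2)\cap(t\B)$; dividing by $t$ and using homogeneity shows $w/t\in(A-a-x_1)\cap(B-b-x_2)\cap\B$. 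Thus the cone condition holds with the same constant $\alpha$, which gives the reverse inequality of suprema and completes the proof. The only point requiring care --- and the main (if modest) obstacle --- is bookkeeping the strictness of the inequalities and the freedom to choose $t$ depending on the fixed data $a,b$, so that the localizing constraints of Definition~\ref{D2}\eqref{D2ii} are met while the constant $\alpha$ is transferred verbatim; it is precisely this exact transfer in both directions that pins the supremum to ${\rm r}[A,B](0)$.
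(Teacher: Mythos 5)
Your proof is correct, but it is worth noting that the paper does not actually supply its own proof of this proposition: it is stated as a consequence of the simplified representations of the transversality constant for convex sets and cones in \cite[Propositions~13 and 15]{Kru05}, so your argument is a self-contained substitute for that citation. What you do is the natural direct route: positive homogeneity ($\rho A=A$ for $\rho>0$ when $A$ is a cone) makes the localization in Definition~\ref{D2}\eqref{D2ii} vacuous, and your two scalings implement this cleanly --- dividing by $\rho$ reduces the condition at scale $\rho$ to the unit-ball condition, while multiplying by a small $t>0$ pulls arbitrary $a\in A$, $b\in B$ inside the open ball $\B_\delta(0)$ and the data $tx_1,tx_2$ strictly inside $\alpha t\B$, so the constraints of the definition are met. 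Because the constant $\alpha$ transfers verbatim in both directions, the sets of admissible $\alpha$ for transversality at $0$ and for the cone condition coincide exactly, which yields the equality of the suprema with ${\rm r}[A,B](0)$, including the possibly infinite case; matching constants (rather than merely equivalence of the properties) is indeed the essential point, and you flag it correctly. Two small reading choices you make are the intended ones: the reference point is the apex $\xbar=0$, as the notation ${\rm r}[A,B](0)$ signals, and $x_1,x_2\in X$ in the statement should be read as $x_1,x_2\in\E$ (a remnant of the normed-space formulation in \cite{Kru05}). Your strictness bookkeeping ($t\|a\|<\delta$, needed since $\B_\delta(0)$ is open, and $t\|x_i\|<t\alpha$) is also handled correctly, so there is no gap.
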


In the case when $A$ and $B$ are smooth manifolds, the interesting understanding
established in \cite[Theorem 5.2]{LewMal08} is easily deduced.
\begin{proposition}[manifolds]\label{t:manifolds}
Let $A$ and $B$ be smooth manifolds around a point $\xbar \in A\cap B$.
Then
\begin{align}\label{Th_9Feb}
{\rm r}_a[A,B](\bx)=c(A,B,\bx).
\end{align}
\end{proposition}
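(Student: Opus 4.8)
The plan is to reduce the statement to elementary linear algebra of the two normal spaces together with the Friedrichs-angle identity \eqref{Fri2}. First I would record that, since $A$ and $B$ are smooth manifolds around $\bx$, the limiting normal cones coincide with the normal spaces, $\ncone{A}(\bx)=\nsub{A}(\bx)$ and $\ncone{B}(\bx)=\nsub{B}(\bx)$ (see \eqref{NS} and the preceding discussion); in particular both are linear subspaces. Because a subspace is symmetric, $-\nsub{B}(\bx)=\nsub{B}(\bx)$, so in the quantity defining $\rga$ in Theorem~\ref{t:MAINii}\eqref{t:MAINiij} I may replace $v_2$ by $-v_2$ and write
\[
\rga=\max\left\{\ang{v_1,v_2}\mid v_1\in\nsub{A}(\bx)\cap\Sp,\ v_2\in\nsub{B}(\bx)\cap\Sp\right\}.
\]

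I would then compare this directly with the Friedrichs cosine. Transversality of $\{A,B\}$ at $\bx$ means $\nsub{A}(\bx)\cap\nsub{B}(\bx)=\{0\}$ (this is \eqref{BasQua}, equivalently Theorem~\ref{t:MAINii}\eqref{t:MAINiid}), so $(\nsub{A}(\bx)\cap\nsub{B}(\bx))^{\perp}=\E$ and the restriction in the definition \eqref{Fried} of $c(\nsub{A}(\bx),\nsub{B}(\bx))$ is vacuous. Hence $c(A,B,\bx)=c(\nsub{A}(\bx),\nsub{B}(\bx))$, via \eqref{e:angle}, is precisely the maximum displayed above, i.e. $\rga=c(A,B,\bx)$. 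Alternatively --- and this is the route that most transparently ``deduces'' the result from the machinery already in place --- I would run it through the metric constants: Theorem~\ref{t:MAINii}\eqref{t:MAINiig} gives $2(\rg)^2=(\rgdd)^2=\min\{d^2(v,\nsub{A}(\bx))+d^2(v,-\nsub{B}(\bx))\mid\norm{v}=1\}$, the subspace symmetry turns $-\nsub{B}(\bx)$ into $\nsub{B}(\bx)$, and \eqref{Fri2} evaluates this minimum as $1-c(A,B,\bx)$; combined with $\rga+2(\rg)^2=1$ from Theorem~\ref{t:MAINii}\eqref{t:MAINiij} this again yields $\rga=c(A,B,\bx)$.

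The step I expect to be the real hinge is the transversality hypothesis $\nsub{A}(\bx)\cap\nsub{B}(\bx)=\{0\}$ needed to apply \eqref{Fri2} (equivalently, to render the Friedrichs restriction vacuous). This is exactly the regime $\rg>0$ in which the angle controls the linear rate in \cite[Theorem~5.2]{LewMal08}, and I would foreground it: were the intersection of the normal spaces nontrivial, a common unit normal $w$ would give $-\ang{w,-w}=1$ and force $\rga=1$, whereas the Friedrichs cosine stays strictly below $1$ by \eqref{Fri1}; so the identity \eqref{Th_9Feb} genuinely lives in the transversal case, and it is there that the two quantities coincide. Everything else --- the passage to $\ncone{}=\nsub{}$ for manifolds and the subspace symmetry $-\nsub{B}(\bx)=\nsub{B}(\bx)$ --- is routine.
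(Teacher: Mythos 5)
Your proposal is correct, and your second route is precisely the paper's own proof: the paper combines the equalities of Theorem~\ref{t:MAINii}\eqref{t:MAINiig} and \eqref{t:MAINiij} into the relation $\rga+(\rgdd)^2=1$, and then, using $\ncone{A}(\bx)=\nsub{A}(\bx)$, $\ncone{B}(\bx)=\nsub{B}(\bx)$ and the subspace symmetry $-\nsub{B}(\bx)=\nsub{B}(\bx)$, identifies $(\rgdd)^2$ with the minimum $1-c(A,B,\bx)$ via \eqref{Fri2} and \eqref{e:angle}. Your first route is a genuine (if mild) variant that the paper does not take: rewriting $\rga$ as $\max\left\{\ang{v_1}{v_2}\mid v_1\in\nsub{A}(\bx)\cap\Sp,\ v_2\in\nsub{B}(\bx)\cap\Sp\right\}$ and observing that triviality of $\nsub{A}(\bx)\cap\nsub{B}(\bx)$ renders the restriction in \eqref{Fried} vacuous gives a direct identification of the two maxima, bypassing the metric constants $\rg$ and $\rgdd$ altogether; what the paper's route buys instead is that the angle is tied explicitly to the quantitative transversality constant $\rg$, which is the content needed for the convergence-rate interpretation in \cite[Theorem~5.2]{LewMal08}. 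Your closing observation is substantive and worth foregrounding: the paper's proof invokes \eqref{Fri2} without comment, yet that identity requires $\nsub{A}(\bx)\cap\nsub{B}(\bx)=\{0\}$, i.e.\ transversality of $\{A,B\}$ at $\bx$ by \eqref{BasQua} and Theorem~\ref{t:MAINii}\eqref{t:MAINiid}. Without it, a common unit normal $w$ (paired with $v_2=-w$) makes the supremum of $-\ang{v_1}{v_2}$ over unit normals equal to $1$, while $c(A,B,\bx)<1$ by \eqref{Fri1} --- for instance $A=B$ a proper manifold gives the value $1$ on the left of \eqref{Th_9Feb} but $c(A,B,\bx)=0$ on the right --- so the identity \eqref{Th_9Feb} tacitly presupposes the transversal regime, a hypothesis absent from the proposition's statement that your write-up correctly supplies.
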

\begin{proof}
The equalities in Theorem \ref{t:MAINii}\eqref{t:MAINiig} and \eqref{t:MAINiij}
imply the following relation between $\rga$ and $\rgdd$:
\begin{align}\label{re}
\rga+(\rgdd)^2=1.
\end{align}
In the case when $A$ and $B$ are smooth manifolds, comparing the definition of $\rgdd$ with condition
\eqref{Fri2} (with $V_1=N_A(\bx)$ and $V_2=N_B(\bx)$) and taking into account the above equality
and definition \eqref{e:angle}, yields the equality \eqref{Th_9Feb}.
\qed
\end{proof}

\begin{remark}
 Theorem 5.2 of \cite{LewMal08} reduces to Proposition~\ref{t:manifolds}
 with the regularity estimate
being a direct consequence of the equality \eqref{re} in view of Proposition~\ref{t:manifolds}.
\xqed
\end{remark}

\begin{remark}
Some sufficient and also necessary characterizations of the subtransversality
property in terms of the Fr\'echet subdifferentials of the
function $x\mapsto d(x,A)+d(x,B)$ were formulated
\cite[Theorem~3.1]{NgaThe01}.
\xqed
\end{remark}

The next example illustrates the computation of the constants characterizing regularity.
\begin{example}\label{E5}
Let $\mathbb{E}=\R^2$, $A=\R\times\{0\}$, $B=\{(t,t)\mid t\in\R\}$, $\bx=(0,0)$.
$A$ and $B$ are linear subspaces.
We have $A\cap B=\{(0,0)\}$, $T_A(\bar{x})=A$, $T_B(\bar{x})=B$, $T_{A\cap B}(\bar{x}) =\{(0,0)\}$, $N_A(\bar{x})=A^{\perp}=\{0\}\times\R$, $N_B(\bar{x})=B^{\perp}=\{(t,-t)\mid t\in\R\}$.
Conditions~\eqref{Tran-1}, \eqref{BasQua-} and \eqref{BasQua} hold true.
The collection $\{A,B\}$ is transversal and, thanks to Proposition~\ref{t:manifolds},
transversal at $\bar{x}$.
By the representations in Theorem \ref{t:MAINii}\eqref{t:MAINiie-z}-\eqref{t:MAINiij},
after performing some simple computations, we obtain: \begin{align*}
\rg&=\frac{1}{2} \norm{\left(\frac{1}{\sqrt{2}}-1,\frac{1}{\sqrt{2}}\right)}= t_2,
\\
\rgd&=\frac{1}{2} \norm{\left(\frac{1}{\sqrt{2}}+1,\frac{1}{\sqrt{2}}\right)}= t_1,
\\
\rgdd&= \sqrt{d^2\left(\left(t_1,t_2\right),A\right)+ d^2\left(\left(t_1,t_2\right),B\right)}
\\
&= \sqrt{\norm{\left(t_1,t_2\right)- \left(t_1,0\right)}^2+ \norm{\left(t_1,t_2\right)- \left(\frac{t_1+t_2}{2},\frac{t_1+t_2}{2}\right)}^2}= t_2\sqrt{2},
\\
\rga&= \ang{\left(\frac{1}{\sqrt{2}},\frac{1}{\sqrt{2}}\right), (1,0)}=\frac{1}{\sqrt{2}},
\end{align*}
where $t_1:=\frac{\sqrt{2+\sqrt{2}}}{2}$ and $t_2:=\frac{\sqrt{2-\sqrt{2}}}{2}$.
It is easy to check that all the relations in Theorem \ref{t:MAINii}\eqref{t:MAINiie-z}-\eqref{t:MAINiij}
are satisfied.
\xqed
\end{example}

\section{Conclusion}
Our objective in this note has been to catalog the underlying theoretical tools behind the
recent flurry of activity on local convergence results for projection-type algorithms for
feasibility.  Of course the theory compiled here is not limited to feasibility problems, but
since most of the notions are geometrically motivated, feasibility offers the most vivid motivation
for the definitions.  Strong metric subregularity for set-valued mappings provides, in our opinion, the most powerful avenue
toward generalizing these ideas to constrained optimization and beyond.

We conclude this note with the underlying challenge that has inspired the development
of this theory:  what are {\em necessary conditions} for local linear
convergence of first order methods?

\end{document}